\newtheorem{defi}{Definition}[section]
\newtheorem{prop}[defi]{Proposition}
\newtheorem{thm}[defi]{Theorem}
\newtheorem{lem}[defi]{Lemma}
\newtheorem{cor}[defi]{Corollary}
\numberwithin{equation}{section}
\newcommand{\N}{\mathbb{N}}
\newcommand{\Z}{\mathbb{Z}}
\newcommand{\R}{\mathbb{R}}
\newcommand{\He}{\mathbb{H}}
\newcommand{\Haus}{\mathcal{H}}
\newcommand{\D}{\mathcal{D}}
\newcommand{\Le}{\mathcal{L}}
\newcommand{\Mass}{\operatorname{\mathbf M}}
\newcommand{\Lip}{\operatorname{Lip}}
\newcommand{\Hol}{\operatorname{H}}
\newcommand{\spt}{\operatorname{spt}}
\newcommand{\diam}{\operatorname{diam}}
\newcommand{\defl}{\mathrel{\mathop:}=}
\newcommand{\defr}{=\mathrel{\mathop:}}
\newcommand{\im}{\operatorname{im}}
\newcommand{\co}{\operatorname{co}}
\newcommand{\B}{{\rm B}} 
\newcommand{\oB}{{\rm U}}  
\newcommand{\degr}[3]{\operatorname{deg}(#1,#2,#3)} 
\newcommand{\wind}[2]{\operatorname{wind}(#2,#1)} 
\newcommand{\ind}[2]{\operatorname{ind}(#2,#1)} 
\newcommand{\id}{{\rm id}}
\newcommand{\overbar}[1]{\mkern 1.5mu\overline{\mkern-1.5mu#1\mkern-1.5mu}\mkern 1.5mu}
\begin{document}

\title[ H\"older surfaces in the Heisenberg group]{Some properties of H\"older surfaces \\in the Heisenberg group}

\author{Enrico Le Donne}
\address{D\'{e}partement de math\'{e}matiques, Universit\'{e} Paris-Sud 11, 91405 Orsay Cedex}
\email{enrico.ledonne@math.ethz.ch}

\author{Roger Z\"{u}st}
\address{D\'{e}partement de math\'{e}matiques, Universit\'{e} de Fribourg, 1700 Fribourg}
\email{roger.zuest@unifr.ch}
\date{May 1, 2012}
\renewcommand{\subjclassname}{%
 \textup{2010} Mathematics Subject Classification}
\subjclass[]{ 
53C17, 
49Q15, 
28A75,  
26A16  
}.

\begin{abstract}
It is a folk conjecture that for $\alpha > 1/2$ there is no 
$\alpha$-H\"older surface in the subRiemannian Heisenberg group. Namely, it is expected that there is no 
embedding from an open subset of $\R^2$ into the  Heisenberg group that is H\"older continuous of order strictly greater than $1/2$. The Heisenberg group here is equipped with its 
Carnot-Carath\'{e}odory distance.
We show that, in the case that such a surface exists, 
 it cannot be of essential bounded variation and it intersects some vertical line in at least a topological Cantor set.
\end{abstract}

\maketitle

\tableofcontents

\section{Introduction}

As phrased by Gromov in \cite[\S 0.5.C]{gromov2},  the H\"older mapping problem between Carnot-Carath\'{e}odory spaces (CC spaces, for short) is the following. Given two CC spaces $V$ and $W$ and a real number $\alpha\in (0,1)$, describe the spaces of $C^\alpha$ maps $f:W\to V.$
In \cite[\S 2.1]{gromov2}, Gromov showed that if $V$ is a contact $3$-dimensional CC manifold and $\alpha>2/3$, then there is no $f:\R^2\to V$ that is a $C^\alpha$ embedding.
Here and in what follows, $\R^2$ is endowed with the Euclidean distance.
Gromov proved such a nonexistence result by showing the nontrivial fact that any topological surface in $V$ has Hausdorff dimension at least $3$.

Giving examples of $C^{1/2}$ embeddings into contact $3$-dimensional CC manifolds is a triviality. Indeed, by the Ball-Box Theorem, any smooth embedding would give an example.
Since the work of Gromov,
it has been an open problem whether there is any $C^\alpha$ embedding $f:\R^2 \hookrightarrow V$ with $\alpha\in(1/2, 2/3]$.

We focus on the example of a standard contact structure, namely the subRiemannian Heisenberg group.
Since all contact $3$-manifolds are locally contactomorphic, there is no loss for generality, being the problem local. 
Hence we consider the contact structure on $\R^3$ with coordinates $p=(p_x,p_y,p_z)$ where the horizontal distribution is given by 
\[ {\rm span}\left\{ {\partial_1} - \frac{p_y}{2}  {\partial_3},
  { \partial_2} + \frac{p_x}{2}   {\partial_3}\right\}. \]
Since we are only interested in H\"older continuity, instead of using a given CC distance, we may use any other distance that is biLipschitz equivalent to it.
Our choice is the following: for $p,p'\in \R^3$,
\[ d(p,p')^4 = ((p'_x - p_x)^2 + (p'_y - p_y)^2)^2 + (p'_z - p_z - \tfrac{1}{2}(p_xp'_y - p_yp'_x))^2. \] 

We denote by $\He$ the metric space $(\R^3,d)$, while $\R^2$ will always be considered with the Euclidean distance. We refer to $\He$ as the (subRiemannian) Heisenberg group.

In our discussion a very special role is played by the horizontal projection, i.e., the map
\[ \pi : \R^3 \to \R^2,\qquad \pi(x,y,z) \defl (x,y). \] 
Notice that $\pi : \He \to \R^2$ is $1$-Lipschitz.

As a  first result,  we show  that, if $\alpha>1/2$, there are no $C^\alpha$ surfaces in $\He$ with the extra property of having essentially bounded variation. For this latter notion we follow \cite{rado} and review it in Definition~\ref{EBV}.

\begin{thm}
\label{heisenbergintro}
Let $U\subset \R^2$ be an open set in the plane. 
Assume there exists $F : U \to \He$ that is a $C^\alpha$ embedding for some $\alpha > \tfrac{1}{2}$.
Then
\begin{itemize}
\item[i) ] the map $\pi \circ F : U \to \R^2$ is not of essentially bounded variation (cf.~Definition \ref{EBV});
\item[ii) ] in particular,   
\[ \int_{\R^2} \#\{ (\pi \circ F)^{-1}  ( q)\} \, dq = \infty. \]
\end{itemize}
\end{thm}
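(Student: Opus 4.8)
The plan is to study the two scalar byproducts of $F=(F_x,F_y,F_z)$: the horizontal part $g\defl\pi\circ F=(F_x,F_y)$ and the height $\phi\defl F_z$. Since $\pi$ is $1$-Lipschitz and $F$ is $\alpha$-Hölder, $g$ is $\alpha$-Hölder; and reading off the second term of the distance, for all $u,u'\in U$
\[ \left|\phi(u')-\phi(u)-\tfrac12\bigl(F_x(u)F_y(u')-F_y(u)F_x(u')\bigr)\right|\le C\,|u-u'|^{2\alpha}, \]
where $2\alpha>1$. I will reduce the theorem to the single assertion that the Banach indicatrix $\int_{\R^2}\#\{g^{-1}(q)\}\,dq$ is infinite: granting this, (ii) is immediate, and (i) follows because a continuous map whose indicatrix integral is finite is essentially of bounded variation in the sense of Definition~\ref{EBV}.

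The key lemma, and the only place the hypothesis $\alpha>\tfrac12$ enters, is that $g$ sends every Lipschitz loop to a loop of zero signed area. Fix a Lipschitz loop $\gamma\colon[0,1]\to U$ and a uniform partition $0=t_0<\dots<t_N=1$; telescoping the displayed estimate along $\gamma$ gives
\[ 0=\phi(\gamma(1))-\phi(\gamma(0))=\sum_{i}\tfrac12\bigl(F_x(\gamma(t_i))F_y(\gamma(t_{i+1}))-F_y(\gamma(t_i))F_x(\gamma(t_{i+1}))\bigr)+\sum_i R_i, \]
with $\sum_i|R_i|\le C\sum_i|\gamma(t_{i+1})-\gamma(t_i)|^{2\alpha}\le C\,\Lip(\gamma)^{2\alpha}\,N^{1-2\alpha}\to0$ as $N\to\infty$, precisely because $2\alpha>1$. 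The first sum converges to the signed area $\mathcal A(g\circ\gamma)$ enclosed by the planar loop $g\circ\gamma$ (a Young integral of $\tfrac12(x\,dy-y\,dx)$ along $g\circ\gamma$). Hence $\mathcal A(g\circ\gamma)=0$ for every Lipschitz loop; equivalently, writing $\mathcal A(g\circ\gamma)=\int_{\R^2}\operatorname{wind}(g\circ\gamma,q)\,dq$, the winding number of $g\circ\gamma$ integrates to zero. Applying this to $\gamma=\partial V$ for every smoothly bounded $V\Subset U$ yields $\int_{\R^2}\degr{g}{V}{q}\,dq=0$: the algebraic (degree) area of $g$ over every subdomain vanishes.

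From here I argue by contradiction: suppose $g$ is essentially of bounded variation. By the properties recorded from Rad\'o, $g$ is then approximately differentiable almost everywhere with integrable Jacobian, its topological degree is defined, and the algebraic area of $g$ over $V$ equals $\int_V J_g$. The previous paragraph forces $\int_V J_g=0$ for all $V$, hence $J_g=0$ a.e. The task is then to contradict this using that $F$, and with it the lift of $g$ to the height $\phi$, is injective: over a vertical line $\pi^{-1}(q)$ the fiber $g^{-1}(q)$ is carried injectively by $\phi$ into $\R$, while the relation above forces $\phi$ to be locally constant along connected pieces of fibers, where $g^{*}\!\bigl(\tfrac12(x\,dy-y\,dx)\bigr)$ degenerates. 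Reconciling the injectivity of $F$ with $J_g=0$ a.e.\ should produce a positive lower bound on $\int_{\R^2}\#\{g^{-1}(q)\}\,dq$, and thereby the contradiction.

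I expect the genuine difficulty to be exactly this last step: extracting a quantitative lower bound on the Banach indicatrix from the embedding hypothesis. The soft facts (that $g$ is $\alpha$-Hölder and that every loop has vanishing signed image area) are consistent with a degree-zero, area-collapsing map and do not by themselves prevent a finite — even vanishing — indicatrix; what must rule this out is that the injective height $\phi$ cannot be carried by an area-collapsed, singular horizontal map $g$ without forcing the slices $F(U)\cap\pi^{-1}(q)$, i.e.\ the fibers $g^{-1}(q)$, to be infinite on a set of $q$ of positive measure. Making this rigorous — reconciling $J_g=0$ a.e.\ (equivalently, the failure of Lusin's condition (N)) with the injectivity of the lift — is where the main work lies, and it is precisely here that the vertical structure of $\He$, through the area-potential identity above, is indispensable.
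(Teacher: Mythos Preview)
Your proposal has the right opening move --- the lift identity and the consequence that $\int_{\R^2}\degr{q}{g}{V}\,dq=0$ for every nice $V\Subset U$ --- but it then stalls at precisely the point you flag, and the paper's argument shows that the missing ingredient is not a Jacobian/indicatrix estimate at all.

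First, a logical slip: you propose to prove (ii) and deduce (i) from ``BV $\Rightarrow$ EBV''. That implication yields the contrapositive (i) $\Rightarrow$ (ii), not the direction you want; EBV does not imply BV in general. The paper proceeds the other way: it proves (i) directly and then obtains (ii) as a corollary of BV $\Rightarrow$ EBV (for which a self-contained argument is supplied, based on a topological degree bound $\degr{0}{G_h}{V}\in\{-1,0,1\}$ for projections of embedded surfaces).

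The substantive gap is that your contradiction hypothesis ($g$ is EBV, hence $J_g=0$ a.e.) does not by itself collide with the embedding property of $F$; you yourself concede this. The paper does not go through approximate differentiability or Lusin~(N). Instead it first establishes, \emph{using the injectivity of $F$}, a geometric fact you never reach: there exists a simplex $\Delta\subset U$ and a component $W$ of $\R^2\setminus g(\partial\Delta)$ with $\degr{W}{g}{\mathring\Delta}\neq 0$. The proof (Lemma~\ref{lemstar}) is a Hex-game argument. Assuming every closed Lipschitz curve in some ball pushes forward to the zero current, one colours lattice points of a square in $U$ black or white according to whether $g$ maps them near a chosen support point $x$ of the common push-forward current; one rules out a white crossing (it would avoid $x$), so the Hex theorem yields a black path across the square, and a limit produces points $a,b$ on opposite faces with $g(a)=g(b)=x$. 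Because $F$ is an embedding, $F(a)$ and $F(b)$ are vertically separated in $\He$, and then the lift identity forces any Lipschitz arc from $b$ to $a$ to have $g$-image with nonzero signed area, contradicting the assumption. This Hex step is exactly where the injectivity of $F$ is spent.

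Once such a simplex is in hand, the finish is short and uses EBV only as an integrable majorant for dominated convergence: set $V=\mathring\Delta\cap g^{-1}(W)$, approximate $V$ from inside by open sets $V_n$ with rectifiable boundary, note $|\degr{q}{g}{V_n}|\le K(q,g,U)\in L^1$, and pass from $\int\degr{q}{g}{V_n}\,dq=0$ to $\int\degr{q}{g}{V}\,dq=\Le^2(W)\,\degr{W}{g}{V}\neq 0$. So EBV is used not to force $J_g=0$ but merely to justify a limit interchange; the contradiction comes from the existence of the nonzero-degree simplex, and that is the piece your outline is missing.
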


We remark that in the assumption that a map $F : U \to \He$ is a $C^\alpha$ embedding there is  no requirement on H\"older regularity of the inverse map. Namely, the map $F^{-1}:F(U)\to U$ is only assumed to be continuous.
 
Our second result gives some topological properties of such  $C^\alpha$ surfaces (if  $\alpha>1/2$).
Recall that a topological Cantor set is a metrizable space that is compact, totally disconnected, and has no isolated points. In other words, it is a homeomorphic image of the standard Cantor set.

\begin{thm}
\label{cantorintro}
Let $U\subset \R^2$ be an open set in the plane.
Assume there exists $F : U \to \He$ that is a $C^\alpha$ embedding  for some $\alpha > \tfrac{1}{2}$.
Then
\begin{itemize}
\item[i) ]  the projection $\pi(F(U))$ has nonempty interior;
\item[ii) ]   there is a dense set of points $q \in \pi(F(U))$ such that $(\pi \circ F)^{-1}(q)$ contains a topological Cantor set.
\end{itemize}
\end{thm}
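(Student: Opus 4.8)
The plan is to exploit the rigidity that Hölder continuity of exponent $\alpha>\tfrac12$ forces through Young integration, together with the failure of bounded variation supplied by Theorem~\ref{heisenbergintro}. Write $G\defl\pi\circ F=(F_1,F_2)$, which is $\alpha$-Hölder because $\pi$ is $1$-Lipschitz, and let $F_3$ be the vertical component. Unwinding the definition of $d$, the embedding hypothesis gives the pointwise bound $|F_3(v)-F_3(u)-\tfrac12(F_1(u)F_2(v)-F_2(u)F_1(v))|\le C|u-v|^{2\alpha}$. Since $2\alpha>1$ and the bracket equals the Young--Riemann summand $\tfrac12\bigl(F_1(u)(F_2(v)-F_2(u))-F_2(u)(F_1(v)-F_1(u))\bigr)$, summing along any Lipschitz arc $\sigma$ and letting the mesh tend to $0$ makes the error terms telescope away, so that
\begin{equation*}
F_3(\sigma(1))-F_3(\sigma(0))=\tfrac12\int_\sigma(F_1\,dF_2-F_2\,dF_1),
\end{equation*}
the Young integral being well defined. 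Two consequences drive everything. First, $F_3$ is a single-valued primitive, so $G\circ\gamma$ encloses zero signed area for every Lipschitz loop $\gamma$; identifying $\operatorname{wind}(q,G\circ\partial\Omega)$ with the degree $\deg(G,\Omega,q)$ and with the linking number of the Jordan curve $F(\partial\Omega)$ with the vertical line $\pi^{-1}(q)$, one gets $\int_{\R^2}\deg(G,\Omega,q)\,dq=0$ for every Lipschitz Jordan subdomain $\Omega\Subset U$. Second, $F$ carries each fibre $G^{-1}(q)$ homeomorphically onto $F(U)\cap\pi^{-1}(q)$, the intersection of the surface with a vertical line, and injectivity makes $F_3$ injective on each fibre.

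For part i) I would start from the conclusion $\int_{\R^2}\#\,G^{-1}(q)\,dq=\infty$ of Theorem~\ref{heisenbergintro}, which forces $G(U)$ to have positive measure and, in particular, produces a fibre with two points $u_1\ne u_2$; by injectivity $F_3(u_1)\ne F_3(u_2)$. Joining them by a simple Lipschitz arc $\sigma$ and applying the primitive identity, the image loop $G\circ\sigma$ (which closes up, as $G(u_1)=G(u_2)$) encloses signed area $F_3(u_2)-F_3(u_1)\ne0$, so $\operatorname{wind}(q',G\circ\sigma)\ne0$ on a set of $q'$ of positive measure. The task is then to convert a loop of nonzero winding into a genuine interior point of $G(U)$. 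Here I would pass to local degrees: the vanishing of the \emph{global} degree integral does not prevent the local index of $G$ from being nonzero at individual points, and I would argue that some point $u_0$ carries nonzero local index, whence $G$ is locally surjective onto a neighbourhood of $G(u_0)$ and $\pi(F(U))=G(U)$ contains an open set. Making the implication ``nonzero winding somewhere $\Rightarrow$ nonzero local index somewhere'' rigorous for a merely $\alpha$-Hölder $G$ is, I expect, the main obstacle in part i): this is exactly where a Stŏilow/light-map analysis, or the index theory encoded in the paper's machinery, must rule out a total cancellation of indices at every point.

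For part ii) I would, with $W\defl\operatorname{int}\pi(F(U))\ne\emptyset$ from part i) in hand, construct uncountable fibres directly by a nested scheme. Fix a small ball $B\subset W$ and aim to produce $q_0\in B$ with a Cantor set in $G^{-1}(q_0)$; since $B$ was arbitrary and $W$ is dense in $\pi(F(U))$, this gives the required dense set of $q_0$. I would build a binary tree of pairwise disjoint Lipschitz Jordan subdomains $\{\Omega_s\}_{s\in\{0,1\}^{*}}$ with $\overline{\Omega_{s0}},\overline{\Omega_{s1}}\subset\Omega_s$ disjoint and $\operatorname{diam}\Omega_s\to0$ along every branch, arranged so that each boundary loop winds nontrivially about $q_0$, i.e.\ $\deg(G,\Omega_s,q_0)\ne0$. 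Nonzero winding is stable, so along each branch $\beta\in\{0,1\}^{\N}$ the nested closures shrink to a point $p_\beta$ with $G(p_\beta)=q_0$; distinct branches give distinct points because the domains ultimately separate, and $\beta\mapsto p_\beta$ is a homeomorphism of the Cantor set $\{0,1\}^{\N}$ onto a compact subset of $G^{-1}(q_0)$, as desired. (Equivalently one may only insist that each $\Omega_s$ contain a fibre point, reducing the statement to producing an uncountable fibre, since a nonempty compact perfect set always contains a topological Cantor set.)

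The splitting step at each node is where the hypotheses must do real work, and I expect it to be the principal difficulty of part ii). Given $\Omega_s$ whose boundary winds about $q_0$, I must find two disjoint subdomains whose boundaries each still wind about $q_0$; the obstruction is sign cancellation, since the folding behaviour can make the local indices over $\Omega_s$ cancel in pairs, so that a crude regrouping nets zero. This is precisely where Theorem~\ref{heisenbergintro} re-enters: because $\int\#G^{-1}=\infty$ cannot be localised away, after refining to a subdomain the fibre over a positive-measure set of $q_0$ meets $\Omega_s$ in arbitrarily many points, and a pigeonhole argument should separate them into two subdomains each carrying net nonzero index. Quantifying this ``many preimages that do not all cancel after grouping'' statement—balancing the infinite total multiplicity against the vanishing of the global degree—is the crux of the whole argument; once it is in place, both the Cantor construction and the density conclusion follow formally.
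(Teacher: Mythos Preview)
Your lifting identity and the consequence that $G\circ\gamma$ encloses zero signed area for every closed Lipschitz $\gamma$ are exactly right and match the paper's Lemmas~\ref{liftlem} and~\ref{notinjlem}. The overall shape of part~ii) (a binary tree of nested domains, each with nonzero degree at a common point) is also the paper's. But the two steps you flag as ``the main obstacle'' and ``the crux'' are genuine gaps, and the paper fills them with ideas you do not have.

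For part~i), your route through Theorem~\ref{heisenbergintro} produces an \emph{arc} $\sigma$ between two fibre points whose image $G\circ\sigma$ is a closed plane curve of nonzero signed area. The difficulty you identify is real: $\sigma$ is not the boundary of anything in $U$, and closing it up by a second arc $\tau$ may kill the winding at every point. The paper resolves this not by local-index or Sto\"ilow theory but by a Hex-game argument (Lemma~\ref{lemstar}): one shows that in every open $V\subset U$ there exist two points $a,b$ in the same $G$-fibre that can be joined by a path $\gamma_n$ whose $G$-image stays inside an arbitrarily small ball $\B(x,2\epsilon)$. Then, joining $a$ to $b$ by any Lipschitz arc $c$ and closing with $\gamma_n$, the concatenation is a genuine closed curve in $V$ whose winding at a point $y$ far from $x$ equals $\operatorname{wind}(y,G\circ c)\neq 0$. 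One more step (Lemma~\ref{propstar}) replaces this loop by the boundary of a simplex $\Delta$, and then $\deg(q,G,\mathring\Delta)\neq 0$ on a whole component $W$ immediately gives $W\subset G(U)$. No appeal to Theorem~\ref{heisenbergintro} is needed; in fact the paper proves Theorem~\ref{cantorintro} first and uses the same Lemma~\ref{lemstar} later for Theorem~\ref{heisenbergintro}, so your logical order is reversed.

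For the splitting in part~ii), your pigeonhole sketch does not work as stated: infinite total multiplicity over a set of positive measure gives many preimages, but nothing prevents their indices from cancelling in every regrouping you try. The paper's mechanism is different and much sharper. Because the \emph{signed} area enclosed by $G\circ\partial\Delta$ vanishes (your own observation), a simplex $\Delta$ with a component $W^+$ of positive winding must also have a component $W^-$ of negative winding. Now work inside $\Delta^+\defl G^{-1}(W^+)\cap\mathring\Delta$: find a smaller simplex $\Delta'\subset\Delta^+$ with a negative-winding component $W^-_{\Delta'}$, and set $V_1=G^{-1}(W^-_{\Delta'})\cap\mathring\Delta'$ and $V_0=G^{-1}(W^-_{\Delta'})\cap(\Delta^+\setminus\Delta')$. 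For $q\in W^-_{\Delta'}$ the sum formula gives
\[
\underbrace{\deg(q,G,\Delta^+)}_{>0}=\deg(q,G,V_0)+\underbrace{\deg(q,G,V_1)}_{<0},
\]
so $\deg(q,G,V_0)>0$ as well, and you have split into two disjoint open sets each mapping onto $W^-_{\Delta'}$ with nonzero degree. Iterating this (with a bookkeeping step to synchronise the target sets $W_n$ across all $2^n$ nodes) yields the Cantor set. The engine is thus the forced coexistence of positive and negative winding, not a counting argument.
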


Notice that, since $F$ is an embedding, Theorem \ref{cantorintro} is claiming that there exists vertical lines (i.e., sets of the form $\pi^{-1}(q)$) that intersect the surface $F(U)$ in a Cantor set.

We recall now what it means for a map $\varphi : V \to \R^n$ defined on a bounded open set $V \subset \R^n$ to be of {\em essentially bounded variation}.
Since for us it will be the case, we may assume that $\varphi$ has a continuous extension to the compact set $\overbar V$. Consider a point $q \in \R^n$. A set $D$ is an {\em indicator domain} for $(q,\varphi,V)$ if:
\begin{enumerate}
\item $D$ is a connected open subset of $\R^n$,
\item $\overbar D \subset V$,
\item $q \notin \varphi(\partial D)$ and 
\item $\degr{q}{\varphi}{D} \neq 0$.
\end{enumerate}
Here we denote by ${\rm deg}$ the mapping degree, see the next section for some basic facts.
We define a {\em multiplicity function} at $p$ by
\[ K(q,\varphi,V) \defl \sup_{\mathcal S} \sum_{D \in \mathcal S} |\degr{q}{\varphi}{D}|, \]
where the supremum is taken over all collections $\mathcal S$ of pairwise disjoint indicator domains for $(q,\varphi,V)$, see \cite[II.3.2]{rado}.



\begin{defi}[Essentially bounded variation]\label{EBV}
Let $V \subset \R^n$ be  an open bounded set.
A continuous map $\varphi : \overbar V \to \R^n$
is said to be of {\em essentially bounded variation} if
\[ \int K(q,\varphi,V) \, dq < \infty, \]
where $K(\cdot,\varphi,V) $ denote the multiplicity function, which we just defined above.
\end{defi}
Recall that by \cite[II.3.2 Theorem 3]{rado}, the functions  $K(q,\varphi,V)$ is nonnegative and lower semi-continuous in $q$ and therefore also Lebesgue measurable. 
\\



The paper is organized as follows.
In Section 2, we review some notions and some previous results.
A part from setting the terminology, we recall some properties of mapping degree, winding number, and currents.
We remark how, on the plane, a H\"older curve of order strictly greater than $1/2$ induces a well-defined $1$-current.
In Section 3, we prove Theorem \ref{cantorintro}. Initially we recall the observation that in the subRiemannian Heisenberg group a
H\"older curve of order strictly greater than $1/2$ is uniquely determined by its projection. In other words, the projection can be uniquely lifted and such a lift is done via the use of  currents or via the use of winding numbers and areas of components of the complement of the curve, see Lemma \ref{notinjlem}.  
Subsequently, we focus on $\alpha$-H\"older surfaces, with  $\alpha > 1/2$. In Lemma \ref{lemstar} we show the first crucial fact: on each surface there are closed curves that have positive winding number with respect to some vertical line.
From such a lemma, it will be easy to show Theorem \ref{cantorintro}, see Theorem \ref{cantor} for the construction of the Cantor set.
In Section 4, we prove Theorem \ref{heisenbergintro}. Parts i) and ii) of the theorem are discussed in Theorem \ref{4.2} and Corollary \ref{4.3}, respectively.
Actually, Property ii) of Theorem \ref{heisenbergintro} follows from Property i) by a general fact.
Namely, if a map has bounded variation, then it has essentially bounded variation.
We give a self-contained proof of this latter fact, for our specific case, in Section 5.
\\

Both authors would like to thank the ETH Z\"urich, for the excellent working environment, 
when part of this research was conducted.

\section{Preliminaries: Euclidean H\"older curves and induced currents}

Let us first fix some the notation. If $(X,d_X)$  is a metric space and $A \subset X$, then $\B(A,r) \defl \{x \in X : d_X(A,y) \leq r\}$ and $\oB(A,r) \defl \{x \in X : d_X(A,y) < r\}$ denote the closed and the open $r$-neighborhoods of $A$, respectively. If $A$ consists of a single point $p$ these sets are the closed and the open balls of radius $r$ centered at $p$.

Next we recall the definition of H\"older maps.

\begin{defi}[H\"older constant {$\Hol(f)$}]
For $\alpha\in(0,1)$, a map $f: X\to Y$ between metric spaces $(X,d_X)$ and $(Y,d_Y)$ is said to be {\em H\"older of order $\alpha$} (or simply, we say that $f$ is $C^\alpha$) if there exists a constant $K < \infty$ such that, for all $x,x'\in X$,
\[ d_Y(f(x),f(x'))\leq K \left(d_X(x,x')\right)^\alpha. \]
In this case, the infimum over all such $K$ is denoted by $\Hol^\alpha(f)$.
\end{defi}

Let $U \subset \R^n$ be a bounded open set and $\varphi: \overbar U \to \R^n$ a continuous map. For every point $q \in \R^n \setminus \varphi(\partial V)$ the {\em mapping degree} of $\varphi$ at $q$ is an integer denoted by $\degr{q}{\varphi}{U}$. For the exact definition and the following properties we refer to \cite{deg}.

\begin{itemize}
\item (locality property) If $K \subset \overbar U$ is closed and $q \notin \varphi(K \cup \partial U)$, then
\[ \degr{q}{\varphi}{U} = \degr{q}{\varphi}{U\setminus K}. \]
\item (sum property) Let $U$ be a disjoint union of open sets $U_i$. Then
\[ \degr{q}{\varphi}{U} = \sum_i \degr{q}{\varphi}{U_i}, \]
in case that all the degrees are defined.
\item (homotopy invariance) Let $H : [0,1] \times \overbar U \to \R^n$ be a continuous map and let $\gamma : [0,1] \to \R^n$ be a continuous path such that $\gamma(t) \notin H_t(\partial U)$ for $0 \leq t \leq 1$. Then $\degr{\gamma(t)}{H_t}{U}$ does not depend on $t$, see \cite[IV Proposition 2.4]{deg}.
\item (multiplication formula) Let $V,W \subset \R^n$ be bounded open sets. Let $\varphi : \overbar V \to \R^n$ and $\psi : \overbar W \to \R^n$ continuous maps such that $\varphi(\overbar V) \subset W$. The open set $W \setminus \varphi(\partial V)$ decomposes into countably many connected components $W_l$. If $q \in \R^n \setminus \psi(\partial W \cup \varphi(\partial V))$, then
\[ \degr{q}{\psi \circ \varphi}{V} = \sum_l \degr{q}{\psi}{W_l}\degr{W_l}{\varphi}{V}, \]
see \cite[IV Proposition 6.1]{deg}.
\end{itemize}

Let $\varphi, \psi : \overbar U \to \R^n$ be two continuous extensions of a map $\gamma : \partial U \to \R^n$ and $q \notin \gamma(\partial U)$, then
\[ \degr q \varphi U = \degr q \psi U, \]
see \cite[IV Proposition 2.6]{deg}. Such an extension of $\gamma$ always exists by the Tietze Extension Theorem. The {\em winding number} of $q$ with respect to $\gamma$ is denoted by $\wind \gamma q$ and defined as the degree of such an extension. The winding number (respectively the degree) is constant on connected subsets of $\R^n \setminus \im(\gamma)$. This allows to define $\wind{\gamma}{W}$ for every $W \in \co(\gamma)$.
Here and afterwards, we denote by $\co(\gamma)$  the collection of all connected components of the set $\R^n \setminus \im(\gamma)$.

As an illustration of the winding number consider a map $\gamma : S^1 \to \R^2\setminus\{0\}$. Then $\gamma$ induces a homomorphism on homology
$\gamma_* : H_1(S^1) \to H_1(\R^2\setminus\{0\})$. Since $H_1(S^1) \simeq H_1(\R^2\setminus\{0\}) \simeq \Z$ we have $\gamma_*(1) = k$ for some $k \in \Z$. In this case, $\gamma$ is homotopic to $s \mapsto s^k$ and $k = \wind{\gamma}{0}$.
\\

In our approach we will occasionally make use of the language of currents. We refer to \cite{kirch} and \cite{lang} for a systematic introduction to the subject in metric spaces.
Let us review some notation.
For $k \in \N$ and a locally compact metric space $X$, we denote by $\D_k(X)$ the collection of $k$-dimensional currents in $X$ as defined in \cite{lang}. The currents we will consider live in Euclidean spaces, i.e., $X = \R^n$ for some $n \in \N$, and have compact support. For this reason, the standard reference by Federer also serves our purpose and one can replace $\D_k(\R^n)$ by $k$-dimensional flat chains in $\R^n$ as defined in \cite[4.1.12]{fed} without any complication.
By $\Mass$ we denote the mass of a current.
If $f \in L^1(\R^n)$ has compact support, then $[f]\in \D_n(\R^n)$ is the $n$-current that acts on compactly supported differential $n$-forms $\omega \in \Omega^n_c(\R^n)$ as
\[ [f](\omega) = \int_{\R^n} f \omega. \]
This current has finite mass $\Mass([f]) = \|f\|_{L^1}$. If $A \subset \R^n$ is measurable and bounded, then $[A]$ denotes the current induced by the characteristic function of $A$. For $k \geq 1$ the space of $k$-currents is naturally equipped with a boundary operator $\partial : \D_k(\R^n) \to \D_{k-1}(\R^n)$ by the defining equation $\partial T(\omega) = T(d\omega)$ for $\omega \in \Omega^{k-1}_c(\R^n)$. A current $T \in \D_n(\R^n)$ with finite mass can be restricted to a measurable subset $A \subset \R^n$, the resulting current $T\lfloor A \in \D_n(\R^n)$ has also finite mass. If $\gamma : X \to Y$ is a map, then $\gamma_\# : \D_*(X) \to \D_*(Y)$ is the push forward operator on currents. A priori, the push forward is only defined when $\gamma$ is Lipschitz, however, the work of the second author extends this operator to a class of H\"older maps by reducing its domain to normal currents, see \cite{rz} or \cite{zust} for more details.

The following proposition relates currents in $\R^n$ with maps and their degrees. It is a bit more general than what we will need in the process, but we state it here for completeness.

\begin{prop}[Proposition~4.6 of \cite{rz}]
\label{degprop}
Let $U \subset \R^n$ be a bounded open set with finite perimeter. Let $\gamma : \partial U \to \R^n$ be a map that is H\"older continuous of order $\alpha > \tfrac{n-1}{n}$. Then $\gamma_{\#}(\partial[U])$ has a unique filling $T_\gamma$ in $\D_n(\R^n)$ with compact support, $T_\gamma$ has finite mass, and
\[ T_\gamma \lfloor (\R^n \setminus \im(\gamma)) = [\wind{\gamma}{\cdot}] = \sum_{W \in \co(\gamma)} \wind{\gamma}{W}[W], \]
\[ \Mass(T_\gamma \lfloor (\R^n \setminus \im(\gamma))) = \int_{\R^n \setminus \im(\gamma)} |\wind{\gamma}{q}| \, dq = \sum_{W \in \co(\gamma)} |\wind{\gamma}{W}| \Le^n(W). \]
If the ${(n-1)}$-dimensional Hausdorff measure of $\partial U$ is
finite,
then the following two equations hold
\[ T_\gamma = [\wind{\gamma}{\cdot}] = \sum_{W \in \co(\gamma)} \wind{\gamma}{W}[W], \]
\[ \Mass(T_\gamma) = \int_{\R^n} |\wind{\gamma}{q}| \, dq  = \sum_{W \in \co(\gamma)} |\wind{\gamma}{W}| \Le^n(W). \]
\end{prop}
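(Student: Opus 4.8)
The plan is to construct $T_\gamma$ from the H\"older push-forward of \cite{rz} and then read off its local multiplicities using degree theory. First I would record the relevant structure of the source: since $U$ is bounded with finite perimeter, $[U]\in\D_n(\R^n)$ is a compactly supported $n$-current of finite mass, so $\partial[U]\in\D_{n-1}(\R^n)$ is a normal (in fact integral) $(n-1)$-current with compact support, finite mass, and $\partial(\partial[U])=0$. Because $\alpha>\tfrac{n-1}{n}$, the H\"older push-forward of \cite{rz} is defined on normal $(n-1)$-currents, is continuous, satisfies a mass estimate, and commutes with $\partial$; hence $S\defl\gamma_\#(\partial[U])$ is a well-defined compactly supported current with $\spt S\subset\im(\gamma)$ and $\partial S=\gamma_\#(\partial(\partial[U]))=0$, i.e.\ a compactly supported $(n-1)$-cycle of finite mass.

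Next I would settle existence and uniqueness of the filling. Existence of a compactly supported finite-mass $T_\gamma$ with $\partial T_\gamma=S$ follows because every compactly supported cycle in $\R^n$ bounds (cone construction), with the mass controlled by the isoperimetric inequality applied to the push-forward mass bound for $S$. Uniqueness is the easy half: if $T_1,T_2$ are two such fillings, then $T_1-T_2$ is a compactly supported $n$-cycle in $\R^n$, which the constancy theorem forces to vanish. This pins down $T_\gamma$.

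The core step is the identification on $\R^n\setminus\im(\gamma)$. For each component $W\in\co(\gamma)$ the restriction $T_\gamma\lfloor W$ has boundary supported in $\partial W\subset\im(\gamma)$, so it is a cycle inside the open connected set $W$, and the constancy theorem gives $T_\gamma\lfloor W=m_W[W]$ for a constant $m_W$. To see $m_W=\wind{\gamma}{W}$ I would approximate $\gamma$ uniformly by Lipschitz maps $\gamma_j$ with $\Hol^\alpha(\gamma_j)$ uniformly bounded, extend each to a Lipschitz $\Gamma_j$ on $\overbar U$, and invoke the classical Lipschitz fact that the (unique) filling of $(\gamma_j)_\#(\partial[U])$ is $(\Gamma_j)_\#[U]$, whose density on a component equals $\degr{\cdot}{\Gamma_j}{U}=\wind{\gamma_j}{\cdot}$. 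Continuity of the push-forward gives $T_{\gamma_j}\to T_\gamma$, while homotopy invariance of the degree gives $\wind{\gamma_j}{q}=\wind{\gamma}{q}$ for large $j$ whenever $q$ lies at positive distance from $\im(\gamma)$; passing to the limit yields $m_W=\wind{\gamma}{W}\in\Z$, hence $T_\gamma\lfloor(\R^n\setminus\im(\gamma))=[\wind{\gamma}{\cdot}]$. The mass formula on the complement is then immediate from $\Mass(m_W[W])=|m_W|\Le^n(W)$ and additivity over the disjoint components.

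Finally, under $\Haus^{n-1}(\partial U)<\infty$ I would upgrade these to global identities. Since $\gamma$ is $\alpha$-H\"older, $\Haus^{(n-1)/\alpha}(\im(\gamma))\le\Hol^\alpha(\gamma)^{(n-1)/\alpha}\Haus^{n-1}(\partial U)<\infty$, and $\alpha>\tfrac{n-1}{n}$ gives $(n-1)/\alpha<n$, so $\Le^n(\im(\gamma))=0$. It then remains to show $T_\gamma$ charges no mass on the null set $\im(\gamma)$: running the same Lipschitz approximation with the area/degree formula $\Mass((\Gamma_j)_\#[U])=\int_{\R^n}|\wind{\gamma_j}{q}|\,dq$ and letting $j\to\infty$ yields $\Mass(T_\gamma)=\int_{\R^n}|\wind{\gamma}{q}|\,dq$, which already equals the mass on $\R^n\setminus\im(\gamma)$; hence $\Mass(T_\gamma\lfloor\im(\gamma))=0$ and $T_\gamma=[\wind{\gamma}{\cdot}]$. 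I expect the main obstacle to be precisely this limiting argument linking the purely analytic H\"older push-forward to the topological winding number: $\gamma$ admits no Lipschitz extension, the push-forward exists only through the normal-current machinery of \cite{rz}, and one must control convergence of the currents and of the winding numbers (and the mass) simultaneously.
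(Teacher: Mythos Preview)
The paper does not contain a proof of this proposition: it is quoted as Proposition~4.6 of \cite{rz} and used as a black box, so there is no in-paper argument to compare yours against.

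On its own merits your outline is sound and almost certainly close to what \cite{rz} does: push forward $\partial[U]$ via the H\"older machinery, fill by a cone, get uniqueness from the constancy theorem, and identify the density on each $W\in\co(\gamma)$ with $\wind{\gamma}{W}$ by Lipschitz approximation plus homotopy invariance of the degree.

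There is one genuine gap, precisely where you predicted. In the last paragraph you write that $\Mass((\Gamma_j)_\#[U])=\int|\wind{\gamma_j}{q}|\,dq$ and that ``letting $j\to\infty$ yields $\Mass(T_\gamma)=\int|\wind{\gamma}{q}|\,dq$.'' Mass is only lower semicontinuous under weak convergence, and you have no uniform integrable majorant for $|\wind{\gamma_j}{\cdot}|$, so neither side of this limit is justified. The clean fix does not go through masses at all. In the metric-current framework the paper adopts (see \cite{lang}, \cite{kirch}), a top-dimensional current of finite mass in $\R^n$ has mass measure absolutely continuous with respect to $\Le^n$; equivalently, every normal $n$-current in $\R^n$ with compact support is $[f]$ for some $f\in L^1$. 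Your Hausdorff estimate already gives $\Le^n(\im(\gamma))=0$ when $\Haus^{n-1}(\partial U)<\infty$, so $T_\gamma\lfloor\im(\gamma)=0$ follows immediately, and the global identity $T_\gamma=[\wind{\gamma}{\cdot}]$ together with the mass formula drop out of the identity on the complement that you have already established.
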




We will only apply the proposition above in dimension $2$ and in the context of curves. Assume now that $\gamma : [0,1] \to \R^2$ is a closed curve that is H\"older continuous of order $\alpha > \tfrac{1}{2}$. By Proposition~\ref{degprop} we obtain a unique filling $T_\gamma \in \D_2(\R^2)$ of $\gamma_\#[[0,1]]$ given by
\[ T_\gamma = \sum_{W \in \co(\gamma)} \wind \gamma W [W]. \]
By abuse of notation we may also write $[\gamma]$ for $\gamma_\#[[0,1]]$.

\begin{lem}
\label{loops}
Let $\gamma : [0,1] \to \R^2$ be a closed H\"older curve of order $\alpha > \tfrac{1}{2}$. Then
\[ \sum_{W \in \co(\gamma)} |\wind{\gamma}{W}| \Le^2(W) 
< \infty \]
and
\begin{equation}
\label{mainformula}
\frac{1}{2}\left(\int_0^1 \gamma_x \, d\gamma_y - \int_0^1 \gamma_y \, d\gamma_x\right) = \sum_{W \in \co(\gamma)} \wind{\gamma}{W} \Le^2(W) .
\end{equation}
\end{lem}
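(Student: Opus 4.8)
The finiteness of $\sum_{W \in \co(\gamma)} |\wind{\gamma}{W}| \Le^2(W)$ is already contained in Proposition~\ref{degprop}, applied to $\gamma$ exactly as in the discussion preceding the statement: we realize $\gamma$ on the boundary circle of a disk, whose $\Haus^1$-measure is finite, so the second (finite-perimeter-with-finite-boundary) conclusion of the proposition applies. It yields the representation $T_\gamma = \sum_{W \in \co(\gamma)} \wind{\gamma}{W}[W]$ of the unique filling of $\gamma_\#[[0,1]]$, together with $\Mass(T_\gamma) = \sum_{W \in \co(\gamma)} |\wind{\gamma}{W}|\, \Le^2(W) < \infty$. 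Being a filling, $T_\gamma$ moreover satisfies the boundary relation $\partial T_\gamma = \gamma_\#[[0,1]]$, which will drive the rest of the argument.

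For the identity \eqref{mainformula} the plan is to realize both sides as the pairing of a single current with one fixed differential form. Consider the smooth $1$-form $\lambda \defl \tfrac{1}{2}(x\,dy - y\,dx)$ on $\R^2$; it is a primitive of the area form, $d\lambda = dx \wedge dy$. Evaluating the filling on the area form and using the explicit description of $T_\gamma$ gives
\[
T_\gamma(d\lambda) = \sum_{W \in \co(\gamma)} \wind{\gamma}{W}\, [W](dx\wedge dy) = \sum_{W \in \co(\gamma)} \wind{\gamma}{W}\, \Le^2(W),
\]
which is precisely the right-hand side of \eqref{mainformula}; the absolute convergence established above legitimizes the interchange of summation implicit in evaluating the infinite sum term by term.

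It then remains to identify the left-hand side with $T_\gamma(d\lambda)$. By the defining property of the boundary operator together with the filling relation $\partial T_\gamma = \gamma_\#[[0,1]]$,
\[
T_\gamma(d\lambda) = \partial T_\gamma(\lambda) = \gamma_\#[[0,1]](\lambda).
\]
Since the pullback of $\lambda$ along $\gamma$ is $\gamma^{\#}\lambda = \tfrac{1}{2}(\gamma_x\,d\gamma_y - \gamma_y\,d\gamma_x)$, pairing the pushed-forward current against $\lambda$ should reproduce
\[
\gamma_\#[[0,1]](\lambda) = \tfrac{1}{2}\left(\int_0^1 \gamma_x\,d\gamma_y - \int_0^1 \gamma_y\,d\gamma_x\right),
\]
the integrals being understood as Young integrals, which are well defined because $\alpha > \tfrac{1}{2}$ makes the sum of the H\"older exponents of integrand and integrator exceed $1$. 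Chaining the three displays gives \eqref{mainformula}.

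The main obstacle is exactly this last identification. Because $\gamma$ is only H\"older and not Lipschitz, neither $\gamma_\#[[0,1]]$ nor its evaluation on $\lambda$ is given by the classical pushforward formula, so one must invoke the extension of the pushforward operator to H\"older maps developed in \cite{rz,zust}. That construction is engineered precisely so that, for a $1$-form with smooth (indeed affine) coefficients such as $\lambda$, the abstract pairing $\gamma_\#[[0,1]](\lambda)$ agrees with the concrete Young integral $\int_0^1 \gamma^{\#}\lambda$; one has to check both this agreement and that the boundary relation $\partial T_\gamma = \gamma_\#[[0,1]]$ continues to hold when tested against smooth forms. This is the technical heart of the proof, and everything else is bookkeeping.
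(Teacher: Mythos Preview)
Your argument is correct and is essentially the paper's own proof, only written in the language of differential forms rather than the tuple notation $(f_0,f_1,\dots,f_n)$ for metric currents: the paper evaluates $T_\gamma$ on $dx\wedge dy$, antisymmetrizes via $T_\gamma(1,\pi_x,\pi_y)=\tfrac12\bigl(T_\gamma(1,\pi_x,\pi_y)-T_\gamma(1,\pi_y,\pi_x)\bigr)$, passes to $\partial T_\gamma=\gamma_\#[[0,1]]$, and then pulls back, which is exactly your chain $T_\gamma(d\lambda)=\partial T_\gamma(\lambda)=\gamma_\#[[0,1]](\lambda)$ with $\lambda=\tfrac12(x\,dy-y\,dx)$. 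The step you flag as the ``technical heart''---that the abstract pairing $\gamma_\#[[0,1]](\lambda)$ coincides with the Young integral---is precisely what the paper dispatches in one line (``the last term is another expression for the left-hand side''), relying on the same references \cite{rz,zust}; so your caution there is well placed but does not indicate any gap.
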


The Riemann-Stieltjes integrals in the statement above exist for this class of H\"older functions by a result of L.C.\ Young, \cite{young}.

\begin{proof}
Let $T_\gamma$ be the filling of Proposition~\ref{degprop}.
The first equation is just stating the fact that $\Mass(T_\gamma)$ is finite. To obtain \eqref{mainformula}, note that
\begin{align*}
 \sum_{W \in \co(\gamma)} \wind{\gamma}{W} \Le^2(W)&=
T_\gamma(dx \wedge dy) \\
& = T_\gamma(1,\pi_x,\pi_y) \\
 & = \frac{1}{2}\left(T_\gamma(1,\pi_x,\pi_y) - T_\gamma(1,\pi_y,\pi_x)\right) \\
 & = \frac{1}{2}\left(\partial T_\gamma(\pi_x,\pi_y) - \partial T_\gamma(\pi_y,\pi_x)\right) \\
 & = \frac{1}{2}\left((\gamma_\#[[0,1]])(\pi_x,\pi_y) - (\gamma_\#[[0,1]])(\pi_y,\pi_x)\right) \\
 & = \frac{1}{2}\left([[0,1]](\pi_x \circ \gamma,\pi_y \circ \gamma) - [[0,1]](\pi_y \circ \gamma,\pi_x \circ \gamma)\right).
\end{align*}
The last term is another expression for the left-hand side of \eqref{mainformula}.
\end{proof}

\section{H\"older surfaces and their intersection with vertical lines}

Initially, we want to clarify to what extent a H\"older curve in the Heisenberg group is the lift of its horizontal projection. The following lemma has been noticed as well by other authors, such as Z.~Balogh, A.~Kozhevnikov, P.~Pansu, J.~Tyson,\ldots.

\begin{lem}
\label{liftlem}
Let $\gamma : [0,T] \to \He$ be a $C^\alpha$ curve  for some $\alpha > \frac{1}{2}$. Then
\[ \gamma_z(t) = \gamma_z(0) + \frac{1}{2} \left( \int_0^t \gamma_x \,d\gamma_y - \int_0^t \gamma_y \,d\gamma_x \right). \]
\end{lem}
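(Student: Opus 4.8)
\emph{Proof proposal.} The plan is to reduce everything to two consequences of the $C^\alpha$ hypothesis that can be read off directly from the distance formula, and then to pass to the limit in a telescoping sum over finer and finer partitions of $[0,t]$. Applying the definition of $d$ to the points $\gamma(s)$ and $\gamma(t)$ and using $d(\gamma(s),\gamma(t)) \le \Hol^\alpha(\gamma)\,|t-s|^\alpha$, the first summand in the expression for $d^4$ shows that the horizontal projection $(\gamma_x,\gamma_y)$ is $C^\alpha$, while the second summand yields the pointwise bound
\[ \left| \gamma_z(t) - \gamma_z(s) - \tfrac12\bigl(\gamma_x(s)\gamma_y(t) - \gamma_y(s)\gamma_x(t)\bigr) \right| \le \Hol^\alpha(\gamma)^2\,|t-s|^{2\alpha}. \]
Here it is crucial that $\alpha > \tfrac12$, so that the exponent $2\alpha$ is strictly larger than $1$.

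First I would fix $t$ and a partition $0 = t_0 < t_1 < \cdots < t_N = t$, abbreviate $x_i \defl \gamma_x(t_i)$, $y_i \defl \gamma_y(t_i)$, and telescope
\[ \gamma_z(t) - \gamma_z(0) = \sum_{i=0}^{N-1} \bigl( \gamma_z(t_{i+1}) - \gamma_z(t_i) \bigr). \]
By the bound above, each increment $\gamma_z(t_{i+1}) - \gamma_z(t_i)$ equals $\tfrac12(x_i y_{i+1} - y_i x_{i+1})$ up to an error $R_i$ with $|R_i| \le \Hol^\alpha(\gamma)^2 |t_{i+1}-t_i|^{2\alpha}$. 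The point of the argument is the elementary identity
\[ \tfrac12\bigl( x_i y_{i+1} - y_i x_{i+1} \bigr) = \tfrac12\bigl( x_i (y_{i+1}-y_i) - y_i (x_{i+1}-x_i) \bigr), \]
which holds because the extra terms $-x_i y_i + y_i x_i$ cancel. Thus the leading part of the telescoped sum is \emph{exactly} the Riemann-Stieltjes sum attached to this partition for $\tfrac12\bigl(\int_0^t \gamma_x\,d\gamma_y - \int_0^t \gamma_y\,d\gamma_x\bigr)$.

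Finally I would let the mesh $\delta \defl \max_i |t_{i+1}-t_i|$ tend to zero. Since $\gamma_x,\gamma_y \in C^\alpha$ with $\alpha + \alpha > 1$, the result of L.~C.~Young cited after Lemma~\ref{loops} guarantees that these Riemann-Stieltjes sums converge to the Young integrals $\int_0^t \gamma_x\,d\gamma_y$ and $\int_0^t \gamma_y\,d\gamma_x$. The total remainder is controlled by
\[ \Bigl| \sum_{i=0}^{N-1} R_i \Bigr| \le \Hol^\alpha(\gamma)^2 \sum_{i=0}^{N-1} |t_{i+1}-t_i|^{2\alpha} \le \Hol^\alpha(\gamma)^2\, \delta^{2\alpha - 1} \sum_{i=0}^{N-1} |t_{i+1}-t_i| = \Hol^\alpha(\gamma)^2\, t\, \delta^{2\alpha - 1}, \]
which tends to $0$ precisely because $2\alpha - 1 > 0$. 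Passing to the limit in the telescoped identity then gives the asserted formula. The only genuine subtlety is the convergence of the Riemann-Stieltjes sums as the mesh goes to zero, which is exactly where Young's theorem and the hypothesis $\alpha > \tfrac12$ enter; everything else is the bookkeeping above, so I expect the main obstacle to be merely invoking Young's convergence in the correct "sums over partitions converge as $\delta \to 0$" form rather than any deeper difficulty.
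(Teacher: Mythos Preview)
Your proof is correct and follows essentially the same route as the paper: both use the Heisenberg distance to bound the $z$-increment by $\tfrac12(x_iy_{i+1}-y_ix_{i+1})$ plus an $O(|t_{i+1}-t_i|^{2\alpha})$ error, rewrite this via the identity $x_iy_{i+1}-y_ix_{i+1}=x_i(y_{i+1}-y_i)-y_i(x_{i+1}-x_i)$, and sum over a fine partition. The only organizational difference is that the paper invokes Young's quantitative remainder estimate $\bigl|\int_s^t \gamma_x\,d\gamma_y - \gamma_x(s)(\gamma_y(t)-\gamma_y(s))\bigr|\le C\,\Hol^\alpha(\gamma_x)\Hol^\alpha(\gamma_y)\,|t-s|^{2\alpha}$ to bound $|\gamma_z-z_\gamma|$ increment by increment, whereas you identify the leading sum as a Riemann--Stieltjes approximation and appeal to Young's convergence theorem directly; these are two phrasings of the same estimate.
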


\begin{proof}
Let $L = \Hol^\alpha(\gamma)$ and define $z_\gamma$ to be the right-hand side of the equation above. We want to show that $\gamma_z = z_\gamma$. It is obvious that $\gamma_x$ and $\gamma_y$ are $\alpha$-H\"older continuous by the definition of $d$. Let $0 \leq s \leq t \leq T$. We know that
\[ |\gamma_z(t) - \gamma_z(s) - \tfrac{1}{2}(\gamma_x(s)\gamma_y(t) - \gamma_y(s)\gamma_x(t))| \leq d(\gamma(s),\gamma(t))^2 \leq L|t - s|^{2\alpha}. \]
We combine the last inequality  with the following three, see e.g.\ \cite[Corollary~3.4]{zust} or \cite{young},
\begin{align*}
\gamma_x(s)\gamma_y(t) - \gamma_y(s)\gamma_x(t) & = \gamma_x(s)(\gamma_y(t) - \gamma_y(s)) 
   - \gamma_y(s)(\gamma_x(t) - \gamma_x(s)), \\
\left|\int_s^t \gamma_x\, d\gamma_y - \gamma_x(s)(\gamma_y(t) - \gamma_y(s)) \right| & \leq C \Hol^\alpha(\gamma_x)\Hol^\alpha(\gamma_y)|t-s|^{2\alpha} \\
 & \leq CL^2|t-s|^{2\alpha}, \\
\left|\int_s^t \gamma_y\, d\gamma_x - \gamma_y(s)(\gamma_x(t) - \gamma_x(s)) \right| & \leq C \Hol^\alpha(\gamma_x)\Hol^\alpha(\gamma_y)|t-s|^{2\alpha} \\
 & \leq CL^2|t-s|^{2\alpha},
\end{align*}
for some constant $C > 0$ depending only on $\alpha$. 
We obtain
\[ |\gamma_z(t) - \gamma_z(s) - (z_\gamma(t) - z_\gamma(s))| \leq D|t-s|^{2\alpha}, \]
where $D = L + CL^2$. For $t \in [0,T]$ and $n \in \N$ (observe that $\gamma_z(0) = z_\gamma(0)$), we have
\begin{align*}
|\gamma_z(t) - z_\gamma(t)| & \leq |\gamma_z(0) - z_\gamma(0)| \\
 & \qquad + \sum_{i=1}^n\left|\gamma_z(t\tfrac{i}{n}) - \gamma_z(t\tfrac{i-1}{n}) - (z_\gamma(t\tfrac{i}{n}) - z_\gamma(t\tfrac{i-1}{n}))\right| \\
 & \leq \sum_{i=1}^n D n^{-2\alpha} = Dn^{1-2\alpha}.
\end{align*}
Taking the limit $n \to \infty$, we get   $\gamma_z(t) = z_\gamma(t)$, for all $t$.
\end{proof}

Lemma~\ref{liftlem} is already the first result where the bound $\alpha > \tfrac{1}{2}$ is sharp. This follows from the fact that for any bounded set $B \subset \He$ there is a constant $C > 0$ such that
\begin{equation*}
\label{holdhalf}
\frac{1}{C}d_{\rm E}(p,p') \leq d(p,p') \leq Cd_{\rm E}(p,p')^{\frac{1}{2}}
\end{equation*}
for all $p,p' \in B$. Here, $d_{\rm E}$ denotes the Euclidean distance on $\R^3$.
\\

Hereafter we start the discussion on H\"older surfaces in the Heisenberg group. Let $U$ be an open set in the Euclidean plane and let $F : U \to \He$  be a H\"older embedding of order $\alpha > \tfrac{1}{2}$. We set
\[ F_h \defl \pi \circ F, \]
and call $F_h$ the {\em horizontal part} of $F$. Recall that, since $\pi$ is $1$-Lipschitz, we have $\Hol^\alpha(F_h) \leq \Hol^\alpha(F)$ for all $\alpha$.

\begin{lem}
\label{notinjlem}
Let $\gamma : S^1 \to U$ (resp.\ $\gamma : [0,1] \to U$) be a closed Lipschitz curve and $\tilde \gamma \defl F_h \circ \gamma$. Then
\[ \sum_{W \in \co(\tilde \gamma)} \wind{\tilde \gamma}{W} \Le^2(W) = 0 \]
and $\tilde \gamma$ is not injective.
\end{lem}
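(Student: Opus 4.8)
The plan is to prove both assertions at once by computing the integral $\frac12\big(\int \tilde\gamma_x\,d\tilde\gamma_y - \int \tilde\gamma_y\,d\tilde\gamma_x\big)$ in two ways. First I would invoke Lemma~\ref{loops}: since $\tilde\gamma = F_h\circ\gamma$ is a closed $C^\alpha$ curve in $\R^2$ with $\alpha>\tfrac12$ (it is H\"older of order $\alpha$ because $\gamma$ is Lipschitz and $F_h$ is $C^\alpha$), formula~\eqref{mainformula} gives
\[
\sum_{W\in\co(\tilde\gamma)}\wind{\tilde\gamma}{W}\,\Le^2(W)
=\frac12\left(\int_0^1\tilde\gamma_x\,d\tilde\gamma_y-\int_0^1\tilde\gamma_y\,d\tilde\gamma_x\right).
\]
So it suffices to show the right-hand side vanishes. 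This is exactly where the lift $F$ enters. Writing $\Gamma\defl F\circ\gamma$, which is a closed $C^\alpha$ curve in $\He$ (closed because $\gamma$ is a loop and $F$ is a genuine map), I would apply Lemma~\ref{liftlem} to $\Gamma$: its vertical coordinate satisfies
\[
\Gamma_z(t)=\Gamma_z(0)+\frac12\left(\int_0^t\Gamma_x\,d\Gamma_y-\int_0^t\Gamma_y\,d\Gamma_x\right).
\]
Since $\Gamma_x=\tilde\gamma_x$ and $\Gamma_y=\tilde\gamma_y$ (the horizontal coordinates of $\Gamma$ are precisely $F_h\circ\gamma=\tilde\gamma$), and since $\Gamma$ is closed so that $\Gamma_z(1)=\Gamma_z(0)$, evaluating the above at $t=1$ forces the full-period integral to be zero. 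This proves the first displayed equality.

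For the non-injectivity, I would argue by contradiction. Suppose $\tilde\gamma$ were injective; being a continuous injection of $S^1$, it is a Jordan curve, so $\R^2\setminus\im(\tilde\gamma)$ has exactly one bounded component $W_0$ with $\Le^2(W_0)>0$ and $\wind{\tilde\gamma}{W_0}=\pm1$, together with the unbounded component of winding number $0$. Then the sum $\sum_W \wind{\tilde\gamma}{W}\Le^2(W)=\pm\Le^2(W_0)\neq0$, contradicting what we just proved. (In the $[0,1]$ case, injectivity of a closed curve on $[0,1)$ gives the same Jordan-curve conclusion.) Hence $\tilde\gamma$ cannot be injective.

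The conceptual heart of the argument is the interplay of the two lemmas, and the one point that needs a little care is the Jordan-curve step: a Jordan curve bounds a region of strictly positive area, so a nonzero winding number on a set of positive measure cannot be cancelled. The only technical hypothesis to double-check is that $\tilde\gamma$ and $\Gamma$ are legitimately closed $C^\alpha$ curves so that Lemmas~\ref{liftlem} and~\ref{loops} apply; this is immediate since $\gamma$ is a closed Lipschitz curve and both $F$ and $F_h$ are $C^\alpha$ of order $\alpha>\tfrac12$. I anticipate no serious obstacle beyond keeping the orientations and coordinate identifications $\Gamma_x=\tilde\gamma_x$, $\Gamma_y=\tilde\gamma_y$ straight.
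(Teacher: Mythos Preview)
Your proof is correct and follows essentially the same approach as the paper: combine Lemma~\ref{liftlem} (the closed lift forces the signed area integral to vanish) with Lemma~\ref{loops} (that integral equals the winding-number sum), then argue by contradiction via the Jordan curve theorem for non-injectivity. The only cosmetic difference is that the paper justifies $|\wind{\tilde\gamma}{W_0}|=1$ explicitly via the Jordan--Sch\"onflies theorem and the multiplication formula for the degree, whereas you invoke it as a known fact about Jordan curves.
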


\begin{proof}
Lemma~\ref{liftlem} in combination with Lemma~\ref{loops} implies that
\begin{align*}
0 = F_z(\gamma(1)) - F_z(\gamma(0)) & = \frac{1}{2}\left(\int_0^1 \tilde \gamma_x \, d\tilde \gamma_y - \int_0^1 \tilde \gamma_y \, d\tilde \gamma_x \right) \\
 & = \sum_{W \in \co(\tilde \gamma)} \wind{\tilde \gamma}{W} \Le^2(W).
\end{align*}
Assume by contradiction that $\tilde \gamma$ is injective. By the Jordan-Sch\"onflies Theorem there is a homeomorphism $\varphi$ of $\R^2$ such that $\varphi|_{S^1} = \tilde \gamma$. The multiplication formula of degree theory leads to
\[ 1 = \degr{\varphi^{-1}(p)} {\id} {\oB(0,1)} = \degr{\varphi^{-1}(p)}{\varphi^{-1}}{V} \degr{p}{\varphi}{\oB(0,1)} \]
for a point $p$ in the bounded component $V \in \co(\tilde \gamma)$ and hence
\[ |\wind {\tilde \gamma} p| = |\degr{p}{\varphi}{\oB(0,1)}| = 1. \]
Because the unbounded component of $\co(\tilde \gamma)$ has vanishing winding number with respect to $\tilde \gamma$
\[ \sum_{W \in \co(\tilde \gamma)} \wind {\tilde \gamma} W \Le^2(W) = \wind {\tilde \gamma} V \Le^2(V) \neq 0 \]
contradicting the first part of the lemma. 
\end{proof}

As an immediate consequence we get that $\pi$ cannot be injective on $F(U)$. In other words, the surface  $F(U)$ is not a graph with respect to the vertical direction. In Theorem~\ref{cantor} we will see a much stronger statement.


We show now that our H\"older surfaces have a special property of twisting. The following proof is based on the fact that a game of Hex always has a winner.
\begin{lem}
\label{lemstar}
Let $F : U \to \He$  be a H\"older embedding of order $\alpha > \tfrac{1}{2}$ defined on an open set $U\subset\R^2$.
Then
\begin{itemize}
\item[(*) ] for every open set $V \subset U$ there is a Lipschitz curve $\gamma : S^1 \to V$ such that 
the current ${F_h}_\#[\gamma] $ is not $ 0$.
\end{itemize}
In particular, for such a $\gamma$, there exist
a component $W \in \co(F_h \circ \gamma)$ with nonzero winding number $\wind{F_h \circ \gamma}W$.
 \end{lem}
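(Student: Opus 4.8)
The plan is to first recast the conclusion entirely in terms of winding numbers, then extract from the non-injectivity of $F_h$ a quantitative ``two-sheet'' datum, and finally to manufacture the required loop by a game of Hex.

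\medskip

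\emph{Reduction to winding numbers.} Writing $\tilde\gamma := F_h\circ\gamma$, the push-forward ${F_h}_\#[\gamma]=(F_h\circ\gamma)_\#[S^1]=\tilde\gamma_\#[S^1]$ is a $1$-current, and by Proposition~\ref{degprop} its filling is $T_{\tilde\gamma}=\sum_{W\in\co(\tilde\gamma)}\wind{\tilde\gamma}{W}[W]$, so that ${F_h}_\#[\gamma]=\partial T_{\tilde\gamma}$. Since the filling is unique, ${F_h}_\#[\gamma]=0$ forces $T_{\tilde\gamma}=0$, i.e. all winding numbers vanish; conversely $T_{\tilde\gamma}\neq 0$ gives ${F_h}_\#[\gamma]=\partial T_{\tilde\gamma}$ with some $\wind{\tilde\gamma}{W}\neq 0$. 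Thus (*) is \emph{equivalent} to producing, in every open $V$, a Lipschitz loop $\gamma$ with $\wind{\tilde\gamma}{W}\neq 0$ for some $W\in\co(\tilde\gamma)$, and this equivalence also yields the final ``in particular'' clause for free.

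\medskip

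\emph{The two sheets and their signed area.} Applying Lemma~\ref{notinjlem} to an embedded Lipschitz loop contained in $V$, the map $F_h$ is non-injective on $V$, so there are $x_0\neq x_1$ in $V$ with $F_h(x_0)=F_h(x_1)=:q$. As $F$ is an embedding, $\delta:=F_z(x_1)-F_z(x_0)\neq 0$. For any Lipschitz arc $\sigma$ in $V$ from $x_0$ to $x_1$ the image $\tilde\sigma=F_h\circ\sigma$ is a \emph{closed} $C^\alpha$ curve (both endpoints equal $q$), and Lemma~\ref{liftlem} combined with Lemma~\ref{loops} gives
\[
\sum_{W\in\co(\tilde\sigma)}\wind{\tilde\sigma}{W}\,\Le^2(W)=\tfrac12\Big(\int_0^1\tilde\sigma_x\,d\tilde\sigma_y-\int_0^1\tilde\sigma_y\,d\tilde\sigma_x\Big)=\delta\neq 0 .
\]
Hence $\tilde\sigma$ already has nonzero winding around some component, and geometrically $\wind{\tilde\sigma}{p}$ records the signed number of crossings of $\sigma$ with the fiber $F_h^{-1}(p)$, so these fibers separate points of $V$.

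\medskip

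\emph{The game of Hex.} The curve $\sigma$ is only an arc in $V$, and naively closing it by a return arc can cancel the winding produced above; to obtain a genuine loop whose image still winds I would invoke that a game of Hex always has a winner. On a fine grid filling a square $Q\subset V$ separating $x_0$ from $x_1$, I color the cells by the sign pattern of the two coordinates of $F_h-p$, for a point $p$ supplied by the previous step. The Hex theorem furnishes monochromatic crossing paths in complementary directions; read off together they bound a loop $\gamma\subset V$ along which $F_h-p$ is forced to run through all four quadrants in cyclic order, whence $\wind{\tilde\gamma}{p}\neq 0$. Equivalently, Hex is used in its Poincar\'e--Miranda form to produce a subsquare $Q'$ on whose boundary $F_h-p$ satisfies the four boundary sign conditions, so that $\degr{p}{F_h}{Q'}=\pm 1$ and $\gamma=\partial Q'$ does the job.

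\medskip

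\emph{The main obstacle.} The crux is the Hex step: arranging, out of the purely existential data $x_0,x_1,\delta$, the boundary sign conditions on $\partial Q'$ (equivalently, guaranteeing that the winner's crossing yields non-cancelling winding). The quantitative input $\delta\neq 0$ from the lift Lemma~\ref{liftlem} is exactly what prevents the positive and negative lobes of $\tilde\gamma$ from annihilating each other, and a combinatorial Hex argument is used in place of smooth transversality precisely because the fibers $F_h^{-1}(p)$ may be wildly irregular (even non-rectifiable), so no transversal intersection count is directly available. Everything else is routine given Lemmas~\ref{liftlem}, \ref{notinjlem}, \ref{loops} and Proposition~\ref{degprop}.
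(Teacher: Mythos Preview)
Your first two steps are correct and indeed parallel the paper: the reduction of $(*)$ to the existence of a nonzero winding number via Proposition~\ref{degprop} is right, and the use of Lemmas~\ref{liftlem} and~\ref{loops} to show that any arc $\sigma$ from $x_0$ to $x_1$ (with $F_h(x_0)=F_h(x_1)$) projects to a closed curve of signed area $\delta\neq 0$ is exactly the engine the paper also relies on.

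The gap is your Hex step. What you describe is not an argument: a coloring by the \emph{four} sign patterns of $F_h-p$ is not a two-coloring, so the Hex theorem does not yield ``monochromatic crossing paths in complementary directions''; two crossing paths do not ``bound a loop''; and nothing you wrote forces $F_h-p$ to traverse the quadrants in cyclic order along any curve you have produced. The Poincar\'e--Miranda reformulation has the same defect: you assert the existence of a subsquare $Q'$ with the four boundary sign conditions, but you give no mechanism for producing it from the data $(x_0,x_1,\delta)$. The obstacle you name in your last paragraph is real, and you have not overcome it.

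The paper's Hex argument is organized differently in a way that matters. It proceeds by contradiction: assuming every closed Lipschitz curve in a ball $V$ pushes forward to zero, one gets a \emph{single} current $T={F_h}_\#[\text{arc}]$ independent of the arc joining two fixed points $p,q$ with $F_h(p)\neq F_h(q)$. Now the Hex board is two-colored: black is $F_h^{-1}(\B(x,\epsilon))$ for a point $x\in\spt(T)\setminus\{F_h(p),F_h(q)\}$. The contradiction hypothesis is exactly what kills a white left--right crossing (any such arc would realize $T$ while its $F_h$-image avoids $x$, contradicting $x\in\spt T$). Hence Hex forces a black top--bottom path, and letting $\epsilon\to 0$ produces points $a,b$ with $F_h(a)=F_h(b)=x$ \emph{together with} a path between (approximations of) them whose $F_h$-image is contained in an arbitrarily small ball about $x$. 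This ``thin'' connector is the missing ingredient in your scheme: concatenating it with an arbitrary arc $c$ from $b$ to $a$ yields a genuine closed Lipschitz loop in $V$, and since $\tilde c$ has nonzero winding about some $y\neq x$ while the connector stays near $x$, the loop still winds about $y$. Your approach obtains $x_0,x_1$ from Lemma~\ref{notinjlem} but never manufactures such a thin return path, so closing $\sigma$ to a loop may (and in general will) annihilate the winding.
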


\begin{proof}
Assume by contradiction that we have an open ball $V \subset U$ for which all closed Lipschitz curves contained in it go to zero by applying ${F_h}_\#$. Fix two points $p$ and $q$ in $V$ with $F_h(p) \neq F_h(q)$ (this is possible because otherwise $F(V)$ would be contained in a vertical axis of $\He$). By some rotation and scaling of $V$ we can assume that $p=(-1,0)$ and $q=(1,0)$. Our assumption implies that there is a $1$-current $T \in \D_1(\R^2)$ such that ${F_h}_\#[\gamma] = T$ for every Lipschitz curve $\gamma$ in $V$ connecting $p$ with $q$ (otherwise we could build a loop not going to the zero-current). The current $T$ is not zero because its boundary is $[F_h(q)] - [F_h(p)]$, which is not zero. Let $x \in \spt(T) \setminus \{F_h(p),F_h(q)\}$ and $\epsilon_0 > 0$ such that $\B(x,\epsilon_0)$ does not contain $F_h(p)$ and $F_h(q)$ (this is possible because a non-zero metric $1$-current cannot be supported on finitely many points, see e.g.\ \cite{lang}). By continuity, there is a $\delta > 0$ such that
\[ F_h(\{(-1,t),(1,t) : t\in[-\delta,\delta]\}) \cap \B(x,\epsilon_0) = \emptyset. \]
Again by some scaling we can assume that $\delta = 1$ and that the whole square $[-1,1]^2$ is contained in $V$.

Let $\epsilon \in(0, \epsilon_0]$ and $n \in \N$ satisfying $\Hol^\alpha(F) 2^\frac{\alpha}{2} n^{-\alpha} \leq \epsilon$.
We want to play a Hex game on $Q = (n^{-1} \Z^2) \cap [-1,1]^2$. Two points $a,b \in Q$ are connected if they have the same color and are adjacent in the sense that
\[ \max\{|b_1-a_1|,|b_2-a_2|\} = n^{-1} \text{ and } a_1 + a_2 \neq b_1 + b_2. \]
If every point of $Q$ is colored with either withe or black, then there exists a white path connecting the two vertical faces of $[-1,1]^2$ or a black path connecting the two horizontal faces, this is implied by the Brouwer Fixed Point Theorem, see e.g.\ \cite{hex}. The black points are those contained in $F_h^{-1}(\B(x,\epsilon))$, all others are white. See Figure~\ref{fig3} for an illustration of the situation at hand. 

\begin{figure}[h]
\centering
\includegraphics[width=0.45\textwidth]{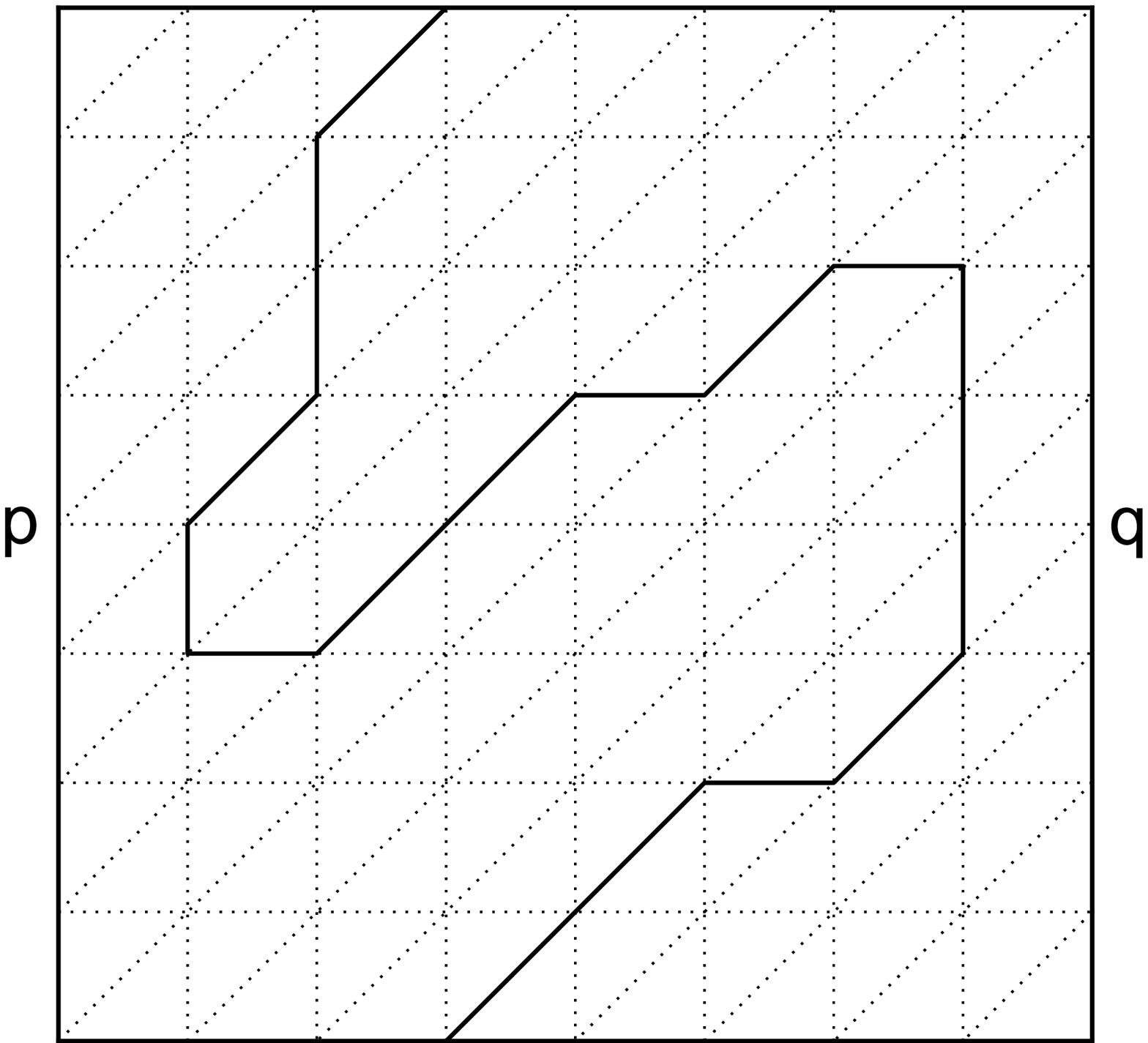}
\caption{The square $[-1,1]^2$ with lattice to the value $n = 4$ and a feasible black path connecting the two horizontal faces. In dotted lines are all the path segments allowed by this lattice.}
\label{fig3}
\end{figure}

We first show that there is no white path from the left to the right face. Assume otherwise. Because both the left and the right faces contain only white points ($\epsilon \leq \epsilon_0$),
 we get a piecewise linear white path $\gamma$ connecting $p$ with $q$. Let $r \in \im(\gamma)$ and $r_Q$ be a white vertex of $\gamma$ with $d(r_Q,r) \leq \sqrt 2 n^{-1}$. Then
\[ d(F_h(r_Q),F_h(r)) \leq \Hol^\alpha(F) d(r_Q,r)^\alpha \leq \Hol^\alpha(F) 2^\frac{\alpha}{2} n^{-\alpha} \leq \epsilon \]
and hence $F_h(r) \neq x$ since $d(F_h(r_Q),x) > \epsilon$. This implies that $x \notin \im(\gamma)$ and also $x \notin \spt({F_h}_\#[\gamma])$ (this is a subset of $\im(\gamma)$). But ${F_h}_\#[\gamma] = T$, a contradiction. 

So there must be a black path $\gamma'$ from top to bottom. Again if $r$ is on this path, we can find a black vertex $r_Q$ with $d(r_Q,r) \leq \sqrt 2 n^{-1}$ leading to
\[ d(x,F_h(r)) \leq d(x,F_h(r_Q)) + d(F_h(r_Q),F_h(r)) \leq 2\epsilon \]
and therefore $F_h(\im(\gamma)) \subset \B(x,2\epsilon)$. 

Choose a sequence $\epsilon_n > 0$ converging to $0$. Based on the preparation above, we can find sequences of points $a_n$ on the top face, $b_n$ on the bottom face and a piecewise linear path $\gamma_n$ inside $[-1,1]^2$ connecting $a_n$ with $b_n$ such that $F_h(\im(\gamma_n)) \subset \B(x,\epsilon_n)$. 
Going to a subsequence if necessary, we can assume that $a_n \to a$ and $b_n \to b$ (also both on the top resp.\ bottom face). By the continuity of $F_h$ we must have $F_h(a) = F_h(b) = x$. Let $c$ be a Lipschitz curve connecting $b$ with $a$ (i.e.\ $\partial[c] = [a]-[b]$).
The curve $F_h \circ c$ is  closed and H\"older  and ${F_h}_\#[c] \neq 0$ by Lemma~\ref{loops} and Lemma~\ref{liftlem} ($F(a)$ and $F(b)$ have to be separated vertically inside $\He$). By the formula in Lemma~\ref{loops}, there is a point $y \in \R^2 \setminus \im(F_h \circ c)$ with $\wind{F_h \circ c}{y} \neq 0$. Let $\epsilon < d(x,y)$ and choose $n$ big enough so that $\epsilon_n \leq \min\{\epsilon_0,\epsilon\}$ and $\Hol^\alpha(F)\max\{d(a,a_n),d(b,b_n)\}^\alpha \leq \epsilon$. 
 Denote by $\gamma_a$ the straight line connecting $a$ with $a_n$ and $\gamma_b$ the straight line connecting $b_n$ with $b$. The concatenation $\gamma \defl c * \gamma_a * \gamma_n * \gamma_b$ is a closed Lipschitz curve such that $y \notin \im(F_h \circ \gamma)$ since $\im(F_h \circ (\gamma_a * \gamma_n * \gamma_b)) \subset \B(x,\epsilon)$. For the same reason, the linear homotopy contracting the closed curve $F_h \circ (\gamma_a * \gamma_n * \gamma_b)$ inside $\B(x,\epsilon)$ to $x$ misses $y$. This shows that $0 \neq \wind{F_h \circ c}{y} = \wind{F_h \circ \gamma}{y}$. But this contradicts ${F_h}_\#[\gamma] = 0$ by Lemma~\ref{loops}.
\end{proof}

In the following lemma we plan to strengthen the property (*) of Lemma \ref{lemstar}. We show that one can  assume that the Lipschitz curve of property (*) is actually the boundary of a triangle.
\begin{lem}
\label{propstar} Let $\alpha>1/2$.
Fix a  $C^\alpha$ embedding $F : U \to \He$. Then for any open $V \subset U$ we can find a simplex $\Delta \subset V$ such that
\[ {F_h}_\# \partial [\Delta] \neq 0. \]
\end{lem}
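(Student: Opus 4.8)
The plan is to upgrade the closed Lipschitz curve $\gamma : S^1 \to V$ produced by Lemma~\ref{lemstar} into the boundary of a single triangle, using the additivity of the push-forward under concatenation. By Lemma~\ref{lemstar} we already have a piecewise-linear closed curve $\gamma \subset V$ with ${F_h}_\#[\gamma] \neq 0$; without loss of generality $\gamma$ is a polygon with vertices $v_0, v_1, \dots, v_m = v_0$ in $V$, since every Lipschitz loop can be $C^0$-approximated by inscribed polygons whose images converge, and the push-forward (hence the nonvanishing) is stable under such approximation by the Hölder continuity of $F_h$ together with the filling/mass estimates of Proposition~\ref{degprop}. Fix a basepoint $v_0$ and consider the ``fan'' triangulation of this polygon: for each $i$, let $\Delta_i$ be the (possibly degenerate) triangle with vertices $v_0, v_i, v_{i+1}$.

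The key algebraic step is the telescoping identity
\begin{equation}
\label{eq:fan}
\partial[\gamma] = \sum_{i=1}^{m-1} \partial[\Delta_i]
\end{equation}
at the level of $1$-currents in $\R^2$, which holds because the interior edges $[v_0, v_i]$ each appear twice with opposite orientations and cancel, leaving exactly the boundary edges of $\gamma$. Applying the push-forward operator ${F_h}_\#$ (which is linear on the normal currents involved, by the construction in \cite{rz,zust} referenced in the preliminaries) gives
\begin{equation}
\label{eq:pushfan}
{F_h}_\# \partial[\gamma] = \sum_{i=1}^{m-1} {F_h}_\# \partial[\Delta_i].
\end{equation}
Since the left-hand side is nonzero by Lemma~\ref{lemstar}, at least one summand on the right must be nonzero, so there is an index $i$ with ${F_h}_\# \partial[\Delta_i] \neq 0$. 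That triangle $\Delta_i \subset V$ (convexity of the fan ensures it lies in the convex hull of the $v_j$, which after a preliminary reduction to a curve in a convex $V$ is contained in $V$) is the desired simplex $\Delta$.

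Two points deserve care. First, one must justify that ${F_h}_\#$ is additive over the fan decomposition: the cleanest route is to recall that for each piecewise-linear loop $\eta$ the object ${F_h}_\#[\eta]$ is the filling current $T_{F_h \circ \eta}$ of Proposition~\ref{degprop}, and fillings are unique, so ${F_h}_\# \partial$ respects the current identity \eqref{eq:fan} by uniqueness of the filling of the sum. Second, the approximation of the original Lipschitz $\gamma$ by a polygon and the containment of the fan triangles in $V$ require that we first shrink to a convex open $V$ (e.g.\ an open ball), which is harmless since property~(*) is quantified over \emph{all} open $V \subset U$. \textbf{The main obstacle} I expect is precisely the interchange of ${F_h}_\#$ with the finite sum in \eqref{eq:fan}: the push-forward is only defined on normal currents via the Hölder extension of \cite{rz}, so one cannot treat it as a naive linear map on arbitrary currents and must verify that each $\partial[\Delta_i]$ lies in its domain and that linearity holds on this domain — equivalently, that the filling of a sum of Hölder-loop boundaries is the sum of the individual fillings. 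Everything else is elementary polygonal geometry and the pigeonhole principle.
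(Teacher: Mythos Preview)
Your approach is essentially identical to the paper's: reduce to a convex $V$, pass from the Lipschitz loop of Lemma~\ref{lemstar} to a polygonal approximation (the paper justifies this via weak convergence of ${F_h}_\#[\gamma_n]$ using the uniform bound on $\Hol^\alpha(F_h\circ\gamma_n)$, which is the precise form of the ``stability'' you invoke), then cone/fan-triangulate and apply linearity plus pigeonhole. One notational slip: in \eqref{eq:fan} and \eqref{eq:pushfan} you want $[\gamma]$, not $\partial[\gamma]$---the loop $\gamma$ is already the $1$-current and $\partial[\gamma]=0$; also, your worry about linearity of ${F_h}_\#$ is unfounded, since the H\"older push-forward on normal currents in \cite{rz,zust} is linear by construction.
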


\begin{proof}
Without loss of generality,  $V$ is some open convex set containing some point $p_0$. By the previous lemma, there is a closed Lipschitz curve $\gamma : S^1 \to V$ such that ${F_h}_\#[\gamma] \neq 0$. The curve $\gamma$ can be approximated by piecewise linear maps $\gamma_n$ such that $\Lip(\gamma_n)$ is bounded. This means that there are points $0 = s_0 < s_1 < \dots < s_{k_n} = 2\pi$ such that, for $s \in [s_i, s_{i+1}]$, we have
\[ \gamma_n(s) = \gamma_n(s_i) + \frac{s - s_i}{s_{i+1} - s_i}(\gamma_n(s_{i+1})-\gamma_n(s_i)). \]
The boundedness of the Lipschitz constants implies that ${F_h}_\#[\gamma_n]$ converges weakly to ${F_h}_\#[\gamma]$ because $\Hol^\alpha(F_h \circ \gamma_n)$ is also bounded in $n$ and $\alpha > \tfrac{1}{2}$. Therefore, we can find some $n$ for which ${F_h}_\#[\gamma_n] \neq 0$. Fix this $n$. Let $\lambda : \B(0,1) \to V$ be the Lipschitz extension of $\gamma_n$ defined by $\lambda(ts) = p_0 + t\gamma_n(s)$ for all $s \in S^1$ ($V$ is convex so this makes sense). We get
\[ [\gamma_n] = \lambda_\# \partial [\B(0,1)] = \lambda_\# \partial \sum_{i=1}^{k_n} [c(s_{i-1},s_i)] = \sum_{i=1}^{k_n} \partial \lambda_\# [c(s_{i-1},s_i)], \]
where $c(s_{i-1},s_i) = \{ ts : t \in [0,1], s \in [s_{i-1},s_i] \}$. By construction, the pushforward $\lambda_\# [c(s_{i-1},s_i)]$ is equal to $[\Delta_i]$ for some simplex $\Delta_i \subset V$ and because of
\[ {F_h}_\#[\gamma_n] = \sum_{i=1}^{k_n} {F_h}_\# \partial [\Delta_i] \]
there is at least one $i$ for which ${F_h}_\# \partial [\Delta_i] \neq 0$.
\end{proof}

\begin{defi}[Set $I_F$ of irregular points]
Let $F : U \to \He$ be a map. A point $p \in U$ is called {\em regular} if it is an isolated point in $F_h^{-1}(F_h(p))$. Otherwise $p$ is called {\em irregular} and we denote by  $I_F \subset U$ the set of {\em irregular points}.
\end{defi}
A regular point $p$ has the property that 
there is an $\epsilon > 0$ such that
\[ F_h^{-1}(F_h(p)) \cap \B(p,\epsilon) = \{p\}. \]
The next result indicates, in terms of the quantity of irregular points,  that the surface $F(U)$ has to be folded quite strongly. 

\begin{thm}
\label{cantor}
For all open $V \subset U$ there is a point $q \in F_h(V)$ for which $F_h^{-1}(q) \cap V$ contains a Cantor set. In particular, $I_F$ is dense in $U$.
\end{thm}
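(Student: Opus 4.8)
The plan is to build the Cantor set inside a single vertical line by an iterated construction driven by Lemma~\ref{propstar}. The guiding intuition is that whenever a simplex $\Delta$ satisfies ${F_h}_\# \partial [\Delta] \neq 0$, Lemma~\ref{loops} produces a component $W \in \co(F_h \circ \partial\Delta)$ with nonzero winding number, and a point $q \in W$ then has the property that $F_h$ on $\Delta$ covers $q$ with nonzero degree. The key structural fact I want to extract is a \emph{folding} statement: if $\degr{q}{F_h}{\Delta} \neq 0$, then I can split $\Delta$ into two subsimplices $\Delta'$ and $\Delta''$ (or two smaller convex pieces) such that $q$ is still covered with nonzero degree \emph{inside each of them}, i.e. $\degr{q}{F_h}{\Delta'} \neq 0$ and $\degr{q}{F_h}{\Delta''} \neq 0$, with $\Delta'$ and $\Delta''$ disjoint. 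If I can do that, then iterating gives a dyadic tree of nested nonempty closed preimage sets, whose intersection is a Cantor set in $F_h^{-1}(q)$.

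First I would set up the recursion carefully. By Lemma~\ref{propstar}, applied to $V$, fix a simplex $\Delta_0 \subset V$ with ${F_h}_\# \partial[\Delta_0] \neq 0$, and by Lemma~\ref{loops} fix a target point $q \in \R^2$ with $\wind{F_h \circ \partial \Delta_0}{q} = \degr{q}{F_h}{\Delta_0} \neq 0$ and $q \notin F_h(\partial \Delta_0)$. The inductive claim is that for each finite binary string $\sigma$ I produce a closed convex set (simplex or polygon) $\Delta_\sigma \subset V$ with $\diam(\Delta_\sigma) \to 0$ along branches, with $\Delta_{\sigma 0}$ and $\Delta_{\sigma 1}$ disjoint subsets of $\Delta_\sigma$, and with $\degr{q}{F_h}{\Delta_\sigma} \neq 0$ (so in particular $q \in F_h(\Delta_\sigma)$ and $q \notin F_h(\partial \Delta_\sigma)$). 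The nonvanishing degree guarantees each $\Delta_\sigma$ meets $F_h^{-1}(q)$; the disjointness of the two children at every stage, together with compactness, makes the limit set $C \defl \bigcap_{n}\bigcup_{|\sigma| = n} (F_h^{-1}(q) \cap \Delta_\sigma)$ totally disconnected, compact, and perfect, hence a topological Cantor set contained in $F_h^{-1}(q) \cap V$.

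The main obstacle is exactly the splitting step: given $\Delta_\sigma$ with $\degr{q}{F_h}{\Delta_\sigma} \neq 0$, I must find two \emph{disjoint} smaller convex pieces each still carrying nonzero degree at $q$. The clean way is to reapply the winding/Hex machinery rather than hope for naive degree additivity. Concretely, inside a small open subregion of $\Delta_\sigma$ disjoint from $F_h^{-1}(q)$'s boundary data I would invoke Lemma~\ref{propstar} again to get a subsimplex producing a \emph{new} nonzero winding number; the point is that the folding phenomenon of Lemma~\ref{lemstar} (two paths $a_n \to a$, $b_n \to b$ with $F_h(a)=F_h(b)=x$) gives, in any subregion, genuinely distinct preimage points mapping near a common value, which is what lets me separate the two children spatially. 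I would choose $x$ near $q$ (or rerun the construction with the target point $x$ supplied by Lemma~\ref{lemstar} inside $\Delta_\sigma$) so that the two constructed curves, and hence the two enclosed subsimplices with nonzero degree, lie in disjoint convex neighborhoods. The technical care needed is to keep $q \notin F_h(\partial \Delta_{\sigma i})$ at every stage, which is arranged by the same $\epsilon$-to-$n$ quantitative estimate $\Hol^\alpha(F) 2^{\alpha/2} n^{-\alpha} \leq \epsilon$ used in Lemma~\ref{lemstar}, shrinking $\epsilon$ along the branches to force $\diam(\Delta_\sigma) \to 0$. Finally, the last sentence ``$I_F$ is dense in $U$'' is immediate: every point of the Cantor set $C \subset F_h^{-1}(q) \cap V$ is a non-isolated point of $F_h^{-1}(q) = F_h^{-1}(F_h(\cdot))$, hence irregular, so $I_F \cap V \neq \emptyset$ for every open $V \subset U$.
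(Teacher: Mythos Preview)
Your proposal has a genuine gap at the splitting step, and it is precisely the step you flagged as the main obstacle. You fix the target point $q$ once and for all and then claim that, given $\Delta_\sigma$ with $\degr{q}{F_h}{\Delta_\sigma}\neq 0$, you can produce two disjoint convex children each with nonzero degree at the \emph{same} $q$. Nothing in Lemma~\ref{lemstar} or Lemma~\ref{propstar} delivers this: those lemmas, applied inside $\Delta_\sigma$, hand you a new simplex and a new component $W$ with nonzero winding, but $W$ need not contain your fixed $q$. The ``folding'' from the Hex argument gives two distinct preimages of some point $x$ determined by the construction, not of $q$; saying ``choose $x$ near $q$'' does not help, because nearness of targets says nothing about preimages, and the parenthetical ``rerun with the target $x$'' silently abandons the fixed-$q$ framework on which your whole dyadic tree rests. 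Note too that if $\degr{q}{F_h}{\Delta_\sigma}=\pm 1$, a convex bisection into $\Delta',\Delta''$ gives degrees summing to $\pm 1$, so one of them can perfectly well vanish; there is no a priori reason both children carry $q$.

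The paper's proof resolves this by \emph{not} fixing $q$ in advance. Two ideas are missing from your sketch. First, Lemma~\ref{notinjlem} forces $\sum_W \wind{\tilde\gamma}{W}\Le^2(W)=0$, so any simplex with nontrivial projected boundary has components of \emph{both} signs. Starting from $\Delta$ with a positive-winding component $W^+$, one passes to $\Delta^+ \defl F_h^{-1}(W^+)\cap\mathring\Delta$, finds (via Lemma~\ref{propstar}) an inner simplex $\Delta'\subset\Delta^+$ with a \emph{negative}-winding component $W^-_{\Delta'}$, and then for $q'\in W^-_{\Delta'}$ the sum property gives that both $V_1=\mathring\Delta'\cap F_h^{-1}(W^-_{\Delta'})$ and the annular piece $V_0=(\Delta^+\setminus\Delta')\cap F_h^{-1}(W^-_{\Delta'})$ have nonzero degree at $q'$: the sign change is what forces the complement to contribute. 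Second, because the target now moves, one must synchronize across all $2^n$ branches at level $n$: the paper does this by processing the $V_\omega$'s sequentially, each time intersecting with the preimage of the window produced at the previous branch, obtaining a nested chain $W_{n+1}^1\supset\cdots\supset W_{n+1}^{2^n}=:W_{n+1}$. The point $q$ is then $\bigcap_n W_n$, determined only at the end. Your scheme, which treats the branches independently with a pre-chosen $q$, cannot be made to work without these two ingredients.
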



\begin{proof}
By Lemma~\ref{propstar} and Lemma~\ref{lemstar} there is a simplex $\Delta \subset V$ such that ${F_h}_\#[\partial \Delta] \neq 0$. By Lemma~\ref{notinjlem} this means that there are components $W^+_\Delta$ and $W^-_\Delta$ of $\R^2 \setminus F_h(\partial\Delta)$ that have positive resp.\ negative winding number with respect to $\partial \Delta$ ($\partial \Delta$ is given the standard counterclockwise orientation). Now consider the set $\Delta^+ \defl F_h^{-1}(W^+_\Delta) \cap \mathring\Delta$. Repeating the same procedure with $\mathring \Delta^+$ in place of $V$, there is a simplex $\Delta' \subset \mathring \Delta^+$ and a component $W^-_{\Delta'}$ as before. Define the two sets
\begin{align*}
V_0 & \defl F_h^{-1}(W^-_{\Delta'}) \cap \Delta^+ \setminus \Delta', \\
V_1 & \defl F_h^{-1}(W^-_{\Delta'}) \cap \mathring\Delta'.
\end{align*}
By the sum property for the degree
\[ \degr{q}{F_h}{\Delta^+} = \degr{q}{F_h}{V_0} + \degr{q}{F_h}{V_1} \]
for any point $q \in W_{\Delta'}^-$. 
We have that $\degr{q}{F_h}{V_1}$ is negative and $\degr{q}{F_h}{\Delta^+}$ is positive. Hence, $\degr{q}{F_h}{V_0}$ is positive and $q$ has to be in the image $F_h(V_0)$. In particular, $V_0$ and $V_1$ are two open sets with closures contained inside $V$ and $F_h(V_0) = F_h(V_1) = W^-_{\Delta'} \defr W_1$. By taking a smaller simplex for $\Delta$ we can ensure that $\diam(V_0)$ and $\diam(V_0)$ are as small as we want. Similarly by taking a smaller set for $W^-_{\Delta'}$ in the construction of $V_0$ and $V_1$ above, we can ensure that the closures of $V_0$ and $V_1$ are disjoint.

Assume now that we have constructed the open sets $W_1,\dots,W_n$ as well as the open sets $V_\omega$ for all words $\omega$ in letters $0$ and $1$ with length $|\omega| \leq n$. Further, assume these sets satisfy:
\begin{enumerate}
	\item $\overbar V_{\omega} \subset V_{\omega'}$ if $\omega' \mu = \omega$ for some nonempty word $\mu$, i.e., $\omega'$ is a proper beginning of $\omega$,
	\item $\overbar V_\omega \cap \overbar V_{\omega'} = \emptyset$ if of the two words $\omega$ and $\omega'$ none is a proper beginning of the other,
	\item $F_h(V_\omega) = W_{|\omega|}$,
	\item $\diam(V_\omega) \leq 2^{-|\omega|}$.
\end{enumerate}
Note that (1) and (3) together imply that $\overbar W_{i+1} \subset W_i$.

Let $\omega_1, \dots, \omega_{2^n}$ be an enumeration of all words of length $n$. First, the construction above is repeated for the set $V_{\omega_1}$ in place of $V$ to obtain open sets $V_{\omega_1 0}'$, $V_{\omega_1 1}' \subset V_{\omega_1}$ and $W_{n+1}^1$ satisfying the 4 properties above (with an appropriate renaming of the sets).
Next, apply this to $V_{\omega_2} \cap F_h^{-1}(W^1_{n+1})$ to obtain $V_{\omega_2 0}'$,$V_{\omega_2 1}'$ and $W_{n+1}^2 \subset W_{n+1}^1$. We proceed $2^n$ times until the sets $V_{\omega_{2^n}0}$, $V_{\omega_{2^n}1}$ and $W_{n+1}^{2^n}$ are constructed out of $V_{\omega_{2^n}} \cap F_h^{-1}(W^{2^n-1}_{n+1})$. $W_{n+1}^{2^n}$ is contained in all the $W_{n+1}^{i}$'s so we let $W_{n+1} \defl W_{n+1}^{2^n}$ and $V_\omega \defl V_\omega' \cap F_h^{-1}(W_{n+1})$ for all $|\omega| \leq n+1$. The 4 properties hold for these new sets and recursively we can construct the sets $V_\omega$ and $W_{|\omega|}$ for all finite words $\omega$.

The equality
\[ \bigcap_{i\geq1} W_i = \bigcap_{i\geq1} \overbar W_i \]
holds by (1) and (3). This set consists of a single point $q$ by (3) and (4) and the completeness of $\R^2$. Further,
\[ C \defl \bigcap_{i\geq1} \bigcup_{|\omega| \leq i} \overbar V_\omega \]
is a Cantor set and $F_h(C) = \{q\}$. In conclusion, $F_h^{-1}(q) \cap V$ contains a Cantor set.
\end{proof}

Although the set of points $q$ for which $\#\{F_h^{-1}(q)\} = \infty$ is dense in $F_h(U)$, it could be a set of measure zero. In the subsequent section we will look into the sets $\{q : \#\{F_h^{-1}(q)\} \geq k\}$ and their measures in more detail.
We finish this section by joining the previous results to prove Theorem~\ref{cantorintro} of the introduction.

\proof[Proof of Theorem \ref{cantorintro}.]
The first part is implied by Lemma~\ref{propstar}. There is a simplex $\Delta \subset U$ and a component $W$ of $\R^2 \setminus F_h(\partial \Delta)$ such that $\degr{W}{F_h}{\mathring \Delta} \neq 0$ and hence $W \subset F_h(U)$. The second part is just Theorem~\ref{cantor} above.
\qed

\section{Projection of essentially bounded variation}

In this section we want to investigate the possibility of $F_h$ having essentially bounded variation in the sense of Definition \ref{EBV}. 
Let $F : U \to \He$ be $\alpha$-H\"older with $\alpha > 1/2$. We   assume that $U$ is bounded. The next result is an immediate consequence of Proposition~\ref{degprop} and the definition of the multiplicity function $K$.

\begin{lem}
\label{simpclosed}
If $V$ is a connected open set such that $\overbar V \subset U$ and $\Haus^1(\partial V) < \infty$, then the filling of ${F_h}_\# (\partial [V])$, called $T_{\partial V}$, coincides with the current induced by the integrable function $\degr{\cdot}{F_h}{V}$. Moreover, 
\[ \Mass(T_{\partial V}) = \int_{\R^2} |\degr{q}{F_h}{V}| \, dq < \infty, \]
\[ T_{\partial V}(dx \wedge dy) = \int_{\R^2} \degr{q}{F_h}{V} \, dq = 0 \]
and $|\degr{q}{F_h}{V}| \leq K(q,F_h,U)$ for almost all $q \in \R^2$. 
\end{lem}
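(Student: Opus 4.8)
The plan is to read off all four assertions from Proposition~\ref{degprop} applied to the H\"older curve $\gamma \defl F_h \circ \partial[V]$ (more precisely, to $\tilde\gamma \defl F_h|_{\partial V}$, viewed on the finite-perimeter set $V$), since the finiteness of $\Haus^1(\partial V)$ is exactly the extra hypothesis that upgrades the conclusion of that proposition to the unrestricted form $T_{\partial V} = [\wind{\tilde\gamma}{\cdot}]$. First I would note that $F_h$ is $C^\alpha$ with $\alpha > \tfrac12 = \tfrac{n-1}{n}$ for $n=2$, so Proposition~\ref{degprop} applies directly to $\gamma = F_h \circ \partial[V]$ and produces a unique filling $T_{\partial V}$ of ${F_h}_\#(\partial[V])$ with finite mass. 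The identification of the winding number $\wind{\tilde\gamma}{q}$ with the degree $\degr{q}{F_h}{V}$ is the content of the winding-number discussion in Section~2: the winding number of a closed curve bounding $V$ at a point $q \notin \im(\tilde\gamma)$ is by definition the degree of any continuous extension of the boundary map, and $F_h|_{\overbar V}$ is precisely such an extension, so $\wind{\tilde\gamma}{q} = \degr{q}{F_h}{V}$ on each component $W \in \co(\tilde\gamma)$. This gives the first claim, that $T_{\partial V}$ is the current induced by the function $q \mapsto \degr{q}{F_h}{V}$, together with the mass formula
\[ \Mass(T_{\partial V}) = \int_{\R^2} |\degr{q}{F_h}{V}| \, dq < \infty, \]
the integrability being just the finiteness of the mass.

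Next I would establish the vanishing of $T_{\partial V}(dx \wedge dy)$. The cleanest route is to repeat verbatim the computation in the proof of Lemma~\ref{loops}: writing $T_{\partial V}(dx\wedge dy) = \tfrac12(\partial T_{\partial V}(\pi_x,\pi_y) - \partial T_{\partial V}(\pi_y,\pi_x))$ and using $\partial T_{\partial V} = {F_h}_\#(\partial[V])$, one obtains the Riemann--Stieltjes expression $\tfrac12\big(\int \tilde\gamma_x\,d\tilde\gamma_y - \int \tilde\gamma_y\,d\tilde\gamma_x\big)$ along the boundary loop, which by Lemma~\ref{liftlem} equals $F_z(\gamma(1)) - F_z(\gamma(0)) = 0$ since $\partial V$ is a closed curve (or a disjoint union of such). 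This is exactly the argument already run in the proof of Lemma~\ref{notinjlem}, so the integral
\[ \int_{\R^2} \degr{q}{F_h}{V} \, dq = \sum_{W \in \co(\tilde\gamma)} \wind{\tilde\gamma}{W}\Le^2(W) = 0 \]
follows immediately.

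Finally, for the pointwise bound $|\degr{q}{F_h}{V}| \leq K(q,F_h,U)$, I would go back to the definition of the multiplicity function $K(q,F_h,U)$ as a supremum over collections of pairwise disjoint indicator domains. For a fixed $q$ with $q \notin F_h(\partial V)$ and $\degr{q}{F_h}{V} \neq 0$, the set $V$ itself (or rather a connected subset realizing the degree) is a candidate indicator domain for $(q,F_h,U)$ provided $\overbar V \subset U$, which holds by hypothesis. The mild subtlety is that the definition of indicator domain requires $D$ to be \emph{connected}, whereas $V$ is merely assumed connected already — so if $V$ is connected the single-element collection $\mathcal{S} = \{V\}$ is admissible and gives $K(q,F_h,U) \geq |\degr{q}{F_h}{V}|$ directly. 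I expect this last step to be the only real point requiring care: one must check that $V$ genuinely qualifies as an indicator domain (connectedness, $\overbar V \subset U$, $q \notin F_h(\partial V)$, nonzero degree) for almost every relevant $q$, the ``almost all'' caveat absorbing the measure-zero set $F_h(\partial V)$ on which the degree is undefined. Since all four listed conditions for an indicator domain are met for such $q$, the inequality $|\degr{q}{F_h}{V}| \leq K(q,F_h,U)$ holds for a.e.\ $q \in \R^2$, completing the proof.
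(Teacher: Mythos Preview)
Your argument is correct and follows the same route as the paper: Proposition~\ref{degprop} yields the filling and mass identities, the vanishing of $T_{\partial V}(dx\wedge dy)$ comes from the lift computation of Lemmas~\ref{liftlem} and~\ref{notinjlem} exactly as you indicate, and the $K$-bound follows by exhibiting $V$ itself (connected, $\overbar V\subset U$, $q\notin F_h(\partial V)$, nonzero degree) as an indicator domain for $(q,F_h,U)$. The one item the paper spells out that you leave implicit is why the exceptional set $F_h(\partial V)$ is Lebesgue-null: this uses $\Haus^1(\partial V)<\infty$ together with $\alpha>\tfrac12$ to conclude $\Haus^2(F_h(\partial V))=0$.
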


\begin{proof}
From Proposition~\ref{degprop} we know that $T_{\partial V} = [\degr{\cdot}{F_h}{V}]$ and the first two equations are immediate. 
It remains to prove the relation with $K$. 
The set $V$ is a union of countable many pairwise disjoint connected open sets $V_k$. By the sum property for the degree and the definition of $K$,
\[ |\degr{q}{F_h}{V}| = |\sum_k \degr{q}{F_h}{V_k}| \leq \sum_k |\degr{q}{F_h}{V_k}| \leq K(q,F_h,U) \]
holds for every point $q \notin F_h(\partial V)$ (note that $\partial V_k \subset \partial V$ for all $k$). Further, $\Haus^2(F_h(\partial V)) = 0$ because $F_h$ is $C^\alpha$ for some $\alpha > 1/2$ and $\Haus^1(\partial V) < \infty$.
\end{proof}

We can now present a proof of the first part of Theorem~\ref{heisenbergintro} stated in the introduction.

\begin{thm}\label{4.2}
There is no embedding $F : U \to \He$ of H\"older class $\alpha > \tfrac{1}{2}$ such that $F_h$ is of essentially bounded variation.
\end{thm}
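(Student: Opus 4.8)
The plan is to derive a contradiction by exhibiting a point $q$ whose multiplicity function $K(q, F_h, U)$ is so large on a set of positive measure that the defining integral of essential bounded variation must diverge. The engine for this is Theorem~\ref{cantor}: I can find arbitrarily many disjoint open sets on which $F_h$ has nonzero degree over a common region, and the number of such sets grows without bound as I descend through the Cantor construction. The key observation is that $K(q, F_h, U)$, being a supremum of sums $\sum_{D} |\degr{q}{F_h}{D}|$ over disjoint indicator domains, will pick up a contribution from \emph{each} of the $2^n$ sets $V_\omega$ (with $|\omega| = n$) produced at the $n$-th stage of the construction, provided $q$ lies in the common image $W_n = F_h(V_\omega)$.

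\medskip

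\emph{First} I would revisit the construction in Theorem~\ref{cantor}. At stage $n$ we have $2^n$ pairwise disjoint open sets $V_\omega$, all with closures inside $U$ and all satisfying $F_h(V_\omega) = W_n$, together with $\degr{q}{F_h}{V_\omega} \neq 0$ for $q \in W_n$ (this sign-control is exactly what the degree bookkeeping in that proof guarantees). \emph{Next}, for each such $V_\omega$ I would extract, by Lemma~\ref{simpclosed} applied to a connected open set with rectifiable boundary sitting inside $V_\omega$, an indicator domain $D_\omega \subset V_\omega$ with $\degr{q}{F_h}{D_\omega} \neq 0$ for $q$ ranging over (a positive-measure subset of) $W_n$. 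Since the $V_\omega$ are pairwise disjoint, the $D_\omega$ form an admissible competitor collection $\mathcal{S}$ in the definition of $K$, whence
\[
K(q, F_h, U) \;\geq\; \sum_{|\omega| = n} |\degr{q}{F_h}{D_\omega}| \;\geq\; 2^n
\]
for all $q$ in a subset of $W_n$ of positive Lebesgue measure. \emph{Then} I would integrate: since $W_n$ has some fixed positive measure $\Le^2(W_n) > 0$ and the above lower bound holds for every $n$, the integral $\int K(q, F_h, U)\, dq$ dominates $2^n \Le^2(W_n)$ for each $n$, forcing it to be infinite unless $\Le^2(W_n) \to 0$ fast enough. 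This last caveat is the crux.

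\medskip

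\emph{The hard part} is controlling the measures $\Le^2(W_n)$ against the exponential growth $2^n$. The Cantor construction shrinks the \emph{source} sets $V_\omega$ to have $\diam(V_\omega) \le 2^{-|\omega|}$, but it says nothing a priori quantitative about the shrinking of the common \emph{image} $W_n$ in $\R^2$. If $\Le^2(W_n)$ decays faster than $2^{-n}$, the naive bound $2^n \Le^2(W_n)$ need not diverge. To overcome this I would argue that one is \emph{free} to keep $\Le^2(W_n)$ from collapsing too quickly: at the step where $W_{n+1} \subset W_n$ is chosen, the only requirement driving the shrinkage is that the closures of the finitely many new source sets become disjoint and small in diameter, which constrains the sources, not the target measure directly. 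One can therefore carry along, as an additional invariant in the recursion of Theorem~\ref{cantor}, a lower bound such as $\Le^2(W_n) \geq c\, r^n$ for suitable constants; alternatively, and more robustly, one fixes a single stage $n$ with $2^n$ large and uses that $K \geq 2^n$ on a set of \emph{fixed} positive measure independent of refinement, then lets $n \to \infty$ along a sequence for which the image sets do not degenerate. Either way, the conclusion is that $\int K(q, F_h, U)\, dq = \infty$, contradicting the assumed essential bounded variation of $F_h$, which proves the theorem.
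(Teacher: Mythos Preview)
Your strategy has a genuine gap at precisely the point you flag as ``the hard part,'' and neither of your proposed fixes closes it. The construction in Theorem~\ref{cantor} forces $\diam(V_\omega)\le 2^{-|\omega|}$; the simplex $\Delta'$ used to pass from stage $n$ to stage $n+1$ therefore has diameter at most $2^{-n}$, whence $\diam(F_h(\partial\Delta'))\le \Hol^\alpha(F_h)\,2^{-n\alpha}$ and every bounded component of its complement---in particular $W_{n+1}$---has area at most $C\,2^{-2n\alpha}$. Since $\alpha>\tfrac12$, the product $2^{n}\Le^2(W_n)\lesssim 2^{n(1-2\alpha)}\to 0$: your lower bound on $\int K$ tends to zero, not infinity. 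Dropping the diameter control does not help, because the size of $W_{n+1}$ is governed by the negative-winding component produced by Lemma~\ref{propstar}, and nothing in Lemma~\ref{lemstar} or Lemma~\ref{propstar} gives any lower bound on that area. Your second suggestion---fix $n$, observe $K\ge 2^n$ on a set of positive measure, then let $n\to\infty$---shows only that $K$ is unbounded in $L^\infty$, which is entirely compatible with $K\in L^1$; it is the latter you must rule out.

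The paper's argument is structurally different and sidesteps this difficulty: it never tries to show $K$ is large. Instead it uses the assumed integrability of $K$ as a \emph{dominating function}. One fixes a single simplex $\Delta$ and a component $W$ with $\degr{W}{F_h}{\mathring\Delta}\neq 0$, sets $V=\mathring\Delta\cap F_h^{-1}(W)$, and exhausts $V$ from inside by open sets $V_n$ with rectifiable boundary. Lemma~\ref{simpclosed} gives $\int_{\R^2}\degr{q}{F_h}{V_n}\,dq=0$ for every $n$, while $\degr{\cdot}{F_h}{V_n}\to\degr{\cdot}{F_h}{V}$ pointwise on $W$ by locality, and $|\degr{\cdot}{F_h}{V_n}|\le K(\cdot,F_h,U)\in L^1$. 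Dominated convergence then yields $0=\int_{\R^2}\degr{q}{F_h}{V}\,dq=\Le^2(W)\,\degr{W}{F_h}{V}\neq 0$, the desired contradiction. The essentially bounded variation hypothesis enters only to license the passage to the limit, not as something to be violated by a direct counting of indicator domains.
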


\begin{proof}
From Lemma~\ref{propstar} and Lemma~\ref{lemstar} we know that there exists a simplex $\Delta \subset U$ and a component $W$ of $\R^2 \setminus F_h(\partial \Delta)$ such that
\[ \degr {W}{F_h}{\mathring\Delta} \neq 0. \]
Let $V \defl \mathring\Delta \cap F_h^{-1}(W)$. By the locality property
\[ \degr {W}{F_h}{V} = \degr {W}{F_h}{\mathring\Delta} \neq 0. \]
We can approximate $V$ from the inside by an increasing sequence of open sets $V_n$ such that $\partial V_n$ can be covered by finitely many Lipschitz curves. For example, one can take an exhaustion of $V$ by a union of dyadic squares and $V_n$ is the interior of the union of all squares with diameter bigger than $n^{-1}$. The locality property for the degree implies that $\degr{\cdot}{F_h}{V_n}$ converges pointwise to $\degr{\cdot}{F_h}{V}$ on $W$. Due to Lemma~\ref{simpclosed}, $|\degr{q}{F_h}{V_n}| \leq K(q,F_h,U)$, whereas $\int K(q,F_h,U)\, dq < \infty$ because $F_h$ has essentially bounded variation. The Lebesgue Dominated Convergence Theorem implies that
\[ \int_{\R^2} \degr{q}{F_h}{V_n} \, dq \to \int_{\R^2} \degr{q}{F_h}{V} \, dq. \]
But this leads to a contradiction because $\int \degr{\cdot}{F_h}{V_n} \, dq = 0$ for all $n$ and
\[\int_{\R^2} \degr{q}{F_h}{V} \, dq = \Le^2(W)\degr{W}{F_h}{V} \neq 0\]
by construction.
\end{proof}

A particular instance of $F_h$ having essentially bounded variation is when
\[ \int_{\R^2} \#\{F_h = q\} \, dq < \infty. \]
In this case $F_h$ is said to have {\em bounded variation}. With this implication taking for granted at the moment, the following corollary is immediate.

\begin{cor}\label{4.3}
Let $F : U \to \He$ be an embedding of H\"older class $\alpha > \tfrac{1}{2}$. Then
\[ \sum_{k \geq 1} \Le^2(A_k) = \int_{\R^2} \#\{F_h = q\} \, dq = \infty, \]
where $A_k = \{q \in \R^2 : \#\{F_h = q\} \geq k \}$.
\end{cor}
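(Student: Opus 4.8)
The plan is to argue by contradiction, reducing the whole statement to Theorem~\ref{4.2} through the chain of implications (finite multiplicity integral) $\Rightarrow$ (bounded variation) $\Rightarrow$ (essentially bounded variation). First I would dispose of the left-hand equality, which is purely measure-theoretic. Since $q \mapsto \#\{F_h = q\}$ is a nonnegative integer-valued function and $A_k = \{q : \#\{F_h = q\} \geq k\}$, one has the pointwise layer-cake identity $\#\{F_h = q\} = \sum_{k \geq 1} \chi_{A_k}(q)$. Integrating over $\R^2$ and exchanging sum and integral by Tonelli's theorem, all terms being nonnegative, yields
\[ \int_{\R^2} \#\{F_h = q\} \, dq = \sum_{k \geq 1} \Le^2(A_k), \]
so that the two expressions are simultaneously finite or infinite. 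The measurability of the $A_k$ (equivalently, of the Banach indicatrix $q \mapsto \#\{F_h = q\}$) is a standard fact for continuous maps and is not an obstacle here.

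Next I would establish that the common value is $\infty$. Suppose toward a contradiction that $\int_{\R^2} \#\{F_h = q\} \, dq < \infty$. By the definition recalled just before the statement, this says exactly that $F_h$ is of bounded variation. Invoking the general implication that bounded variation entails essentially bounded variation — whose self-contained proof, specialized to the present setting, is deferred to Section~5 — we conclude that $F_h$ is of essentially bounded variation. This, however, directly contradicts Theorem~\ref{4.2}, which asserts that no $C^\alpha$ embedding $F : U \to \He$ with $\alpha > \tfrac{1}{2}$ can have $F_h$ of essentially bounded variation. Hence the integral diverges, and by the first paragraph so does $\sum_{k \geq 1} \Le^2(A_k)$.

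I expect no genuine obstacle at the level of the corollary itself: the layer-cake identity is routine and Theorem~\ref{4.2} is already in hand, so the argument is essentially a repackaging. The only substantive ingredient is hidden in the deferred implication (bounded variation) $\Rightarrow$ (essentially bounded variation), and that is where the care must be taken. There one must control the degree-theoretic multiplicity function $K(q,F_h,U)$ from above by the naive preimage count $\#\{F_h = q\}$ — compatibly with the almost-everywhere comparison $|\degr{q}{F_h}{V}| \leq K(q,F_h,U)$ already appearing in Lemma~\ref{simpclosed} — so as to deduce $\int K(q,F_h,U)\,dq < \infty$ from $\int \#\{F_h = q\}\,dq < \infty$. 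That comparison between the Banach indicatrix and the degree-based multiplicity is the delicate point, but it lies in Section~5 rather than in the statement proved here.
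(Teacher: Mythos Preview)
Your argument is correct and matches the paper's own reasoning exactly: the corollary is stated as immediate from Theorem~\ref{4.2} once one grants the implication (bounded variation) $\Rightarrow$ (essentially bounded variation), which the paper likewise defers to Section~5. Your explicit layer-cake computation for the left-hand equality is a harmless elaboration of what the paper leaves implicit in the statement itself.
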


It is a general fact that a map $\varphi : U \to \R^2$ of bounded variation is also of essentially bounded variation, see \cite[VI.2.2 Theorem 4]{rado}. The reason for this is that, for almost all $q \in \R^2$, the preimage $\varphi^{-1}(q)$ consists of regular points only. The harder part is then to show that, for all but countably many regular points $p \in U$, the degree satisfies
\[ \degr{F_h(p)}{F_h}{\oB(p,r)} \in \{-1,0,1\} \]
for all $r$ small enough, i.e., such that $F_h^{-1}(F_h(p)) \cap \B(p,r) = \{p\}$. Because of the special topological setting of a surface projection, we will recover this property for all regular points in the next section.

\section{On the degree of surface projections}

The results of this section are of purely topological nature and we do not need the particular structure of the Heisenberg group or the fact that the embedding $F$ is H\"older continuous. To emphasize this let $G : U \to \R^3$ be an embedding of an open set $U \subset \R^2$. We want to investigate the value of $\degr q {\pi \circ G} V$, where $\pi$ is the projection of $\R^3$ to the $xy$-plane and $V \subset U$ is some open set. As before we abbreviate $G_h \defl \pi \circ G$. It is understood that all the results that follow apply in particular to the H\"older embeddings $F$ of the previous sections. The main result of this section is the following.

\begin{prop}
\label{topdisc2} 
Let $G : U \to \R^3$ be an embedding of an open set $U \subset \R^2$. 
Assume that $V$ is a bounded connected open set such that $\overbar V \subset U$,  $G_h(\partial V) \subset S^1$, and  $\R^2 \setminus \overbar V$ is connected. 
Then
\[ \degr{0}{G_h}{V} \in \{-1,0,1\}. \]
\end{prop}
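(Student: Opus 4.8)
The plan is to identify the mapping degree with a winding number which, thanks to the hypotheses, is carried by a simple closed curve lying on the vertical cylinder, and then to invoke the Jordan curve theorem on an annulus.

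First I would push everything to the boundary. Since $G_h(\partial V) \subset S^1$, the point $0$ is not in $G_h(\partial V)$, so $\degr{0}{G_h}{V}$ is defined and, as recalled in Section~2, depends only on $G_h|_{\partial V}$, where it equals the winding number of the boundary loop about $0$. Write $C \defl \pi^{-1}(S^1) = S^1 \times \R$ for the vertical cylinder; the hypothesis says precisely that $G(\partial V) \subset C$. Since the winding number about $0$ of a loop $\eta$ in $\R^2 \setminus \{0\}$ is its class in $H_1(\R^2\setminus\{0\})\cong\Z$, since $G_h|_{\partial V} = \pi \circ (G|_{\partial V})$, and since $\pi|_C : C \to S^1$ is a homotopy equivalence inducing an isomorphism on $H_1$, I obtain
\[ \degr{0}{G_h}{V} = \big[\,G(\partial V)\,\big] \in H_1(C) \cong \Z, \]
the homology class of the image of the boundary cycle under the embedding $G$.

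Next I would use the embedding together with the planar hypotheses to see that this class lies in $\{-1,0,1\}$. If $\partial V$ is a single Jordan curve, then $G|_{\partial V}$ is an embedding of a circle into the \emph{two}-dimensional cylinder $C$, i.e.\ a simple closed curve on an annulus. Such a curve is either null-homotopic or generates $H_1(C)$, so its class is $0$ or $\pm 1$: viewing $C \cong \R^2 \setminus \{0\}$, a Jordan curve winds at most once about the missing core by the Jordan curve theorem. This is exactly where embeddedness enters, and where the boundary condition is essential: the projection $G_h|_{\partial V}$ itself need not be simple and could a priori wind many times, but its lift $G|_{\partial V}$ is simple and is confined to the $2$-dimensional surface $C$, on which wrapping twice would force a self-intersection. (The three-dimensional room of $\R^3$ would not suffice, which is why I keep the curve on $C$ and do not retract it off the cylinder.)

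The main obstacle is that under the stated hypotheses $\partial V$ need not be a Jordan curve; its topological boundary may be wild, or may traverse arcs twice (e.g.\ a disk with a slit). Here is where the conditions that $V$ be connected and that $\R^2 \setminus \overline V$ be connected do the work: they guarantee that $\overline V$ does not separate the plane and has no inner boundary, so that the essential boundary cycle $\partial[V]$ is carried by a single outer loop rather than several nested ones. I would therefore reduce to the Jordan case by an exhaustion argument in the spirit of the planar constructions used earlier (Jordan--Sch\"onflies, together with the observation that $|G_h| > \tfrac12$ on a collar of $\partial V$, so that $0$ stays away from a neighborhood of the boundary): one represents the class by a simple closed curve on $C$ without changing $\degr{0}{G_h}{V}$, using the locality and homotopy invariance of the degree. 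Producing this simple closed curve \emph{on the cylinder}, where the annulus argument applies, while keeping the degree unchanged, is the delicate technical point of the proof.
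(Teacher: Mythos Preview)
Your identification $\degr{0}{G_h}{V}=[G_\#\partial[V]]\in H_1(C)$ is correct, and when $\partial V$ happens to be a Jordan curve your argument is complete and in fact cleaner than the paper's: $G|_{\partial V}$ is then a simple closed curve on the two\nobreakdash-dimensional cylinder $C=S^1\times\R$, hence winds $0$ or $\pm1$ times about the core. The gap is precisely where you flag it. After approximating $V$ from inside by $V'$ with Jordan boundary $\gamma$, the simple curve $G\circ\gamma$ lies only \emph{near} $C$, and the radial retraction onto $C$ need not be injective on it (two points with the same angle and height but different radii collapse). Simplicity in $\R^3\setminus\{z\text{-axis}\}$ alone buys nothing, since an embedded circle there can link the axis arbitrarily many times. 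Your last sentence is an honest acknowledgement that the key step is missing, not a proof of it.

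The paper does \emph{not} attempt to produce a simple curve on $C$. It projects a piecewise-linear approximation of $G\circ\gamma$ onto $C$ to obtain a curve $\tilde\sigma$ merely in \emph{general position}, typically with double points. A combinatorial lemma (Lemma~\ref{windlem2}, transported to the cylinder in Corollary~\ref{windcor2}) then extracts, whenever $|\wind{\pi\circ\tilde\sigma}{0}|>1$, a sub-arc $a=[s_1,s_2]$ with $\tilde\sigma(s_1)=\tilde\sigma(s_2)$ and $|\wind{\pi\circ\tilde\sigma|_a}{0}|=1$. Because $G$ is an embedding with uniformly continuous inverse, the endpoints $\gamma(s_1),\gamma(s_2)$ are close in the \emph{domain}; closing $\gamma|_a$ by a short segment yields a loop $\gamma'$ in $U$ with $\wind{G_h\circ\gamma'}{0}=\pm1$. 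Now the connectivity hypotheses on $V$ and $\R^2\setminus\overbar V$ are used a second time (not only in the exhaustion): they force $G_h^{-1}(0)$ into a single bounded component $W_i$ and a single unbounded component $W_o$ of $\R^2\setminus\im(\gamma')$, so the multiplication formula gives $\pm1=\wind{\gamma'}{W_i}\cdot\degr{0}{G_h}{W_i}$, whence by locality $|\degr{0}{G_h}{V}|=|\degr{0}{G_h}{W_i}|=1$, the desired contradiction.
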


Before we turn to the proof, we need some technical preparations. We say that a closed curve $\gamma : S^1 \to \R^n$ is in {\em general position} if $\gamma$ is injective outside a finite subset of $S^1$ and every point of $\R^n$ has at most $2$ preimages. One can show that any closed curve into $\R^2$, or into any other $2$-dimensional manifold for that matter, can be approximated by a curve in general position. This approximation and the one that follow are always assumed with respect to the $C^0$-topology.

Let $s,s' \in S^1$. With $[s,s']$ we denote the closed arc in $S^1$ starting from $s$ and connecting it with $s'$ in clockwise direction. 

\begin{lem}
\label{windlem2}
Let $\gamma : S^1 \to \R^2$ be a curve in general position. If $|\wind{\gamma}{0}|>1$, then there is an arc $a = [s,s']$ in $S^1$ such that $\gamma(s) = \gamma(s')$ and $|\wind{\gamma|_a}{0}| = 1$.
\end{lem}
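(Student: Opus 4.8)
The plan is to argue by contradiction, descending on the number of self-intersection points. Assume $|\wind{\gamma}{0}| \geq 2$ and that \emph{no} sub-arc $[s,s']$ with $\gamma(s)=\gamma(s')$ satisfies $|\wind{\gamma|_{[s,s']}}{0}|=1$; from this I will derive a contradiction. Two facts are used throughout. First, a \emph{simple} closed curve has winding number in $\{-1,0,1\}$ about any point off its image (Jordan curve theorem), which applies since $0\notin\im(\gamma)$. Second, winding numbers are additive under concatenation at a common point; in particular, splitting a loop at a self-intersection $\gamma(t_1)=\gamma(t_2)$ gives $\wind{\gamma}{0}=\wind{\gamma|_{[t_1,t_2]}}{0}+\wind{\gamma|_{[t_2,t_1]}}{0}$, and inserting a sub-loop $L$ based at a point of the curve changes the total winding by exactly $\wind{L}{0}$.

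The engine of the proof is an \emph{excision} move. Given the curve, I first locate a self-intersection sub-arc that is minimal with respect to inclusion; such an arc exists because $\gamma$ has only finitely many double points, and minimality forces the corresponding sub-loop $\sigma$ to be simple, since any interior double point would yield a strictly smaller self-intersection sub-arc. By the first fact, $\wind{\sigma}{0}\in\{-1,0,1\}$. Here the standing assumption enters: because $\sigma$ is an honest arc $\gamma|_{[s,s']}$, the values $\pm1$ are forbidden, so $\wind{\sigma}{0}=0$. I then \emph{delete} $\sigma$, reconnecting the curve across that self-intersection; this strictly lowers the number of self-intersections and, by the second fact, leaves $\wind{\cdot}{0}$ unchanged.

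Iterating produces a finite sequence of closed curves $\Gamma$, each a cyclic concatenation of arcs of $\gamma$ whose deleted gaps are loops of winding $0$ about $0$. The invariant I propagate is that every sub-loop of such a $\Gamma$ pulls back to an honest arc $\gamma|_{[v_1,v_2]}$ of $\gamma$ with the \emph{same} winding about $0$: the arc $[v_1,v_2]$ is recovered from the sub-loop by reinserting the intervening gaps, each contributing $0$ by the second fact. Hence the standing assumption continues to forbid winding $\pm1$ for the minimal simple sub-loops of every $\Gamma$, so each such sub-loop again has winding $0$ and may be excised. Since the self-intersection count strictly decreases at each step, after finitely many excisions $\Gamma$ becomes simple, whence $|\wind{\Gamma}{0}|\leq1$ by the first fact. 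But the winding about $0$ was preserved throughout, so $|\wind{\Gamma}{0}|=|\wind{\gamma}{0}|\geq2$, a contradiction.

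The delicate step is exactly the bookkeeping above: after the first excision the curve is no longer of the form $\gamma|_{[s,s']}$ but a necklace of several arcs of $\gamma$, and a minimal simple sub-loop of such a necklace may straddle one or more deleted gaps, so it need not itself be an arc of $\gamma$. What rescues the argument is that we only ever excise sub-loops of winding $0$; this guarantees that reinserting the skipped gaps alters neither the property of being a genuine arc of $\gamma$ nor the winding number, and so lets the contradiction hypothesis keep applying at every stage. One should also observe that finiteness of the self-intersection set, inherited from general position, persists under excision, since the self-intersections of each $\Gamma$ form a subset of those of $\gamma$.
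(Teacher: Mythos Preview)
Your proof is correct and follows essentially the same approach as the paper: iteratively excise sub-loops of winding $0$, while keeping track of the fact that every sub-loop of the reduced curve has the same winding as a genuine arc of the original $\gamma$ (because only winding-$0$ loops were removed). The only organizational difference is that you argue by contradiction and always excise a \emph{minimal} (hence simple) sub-loop, whereas the paper works directly, excising arbitrary winding-$0$ arcs until none remain and then locating a minimal remaining double-point arc, which is simple and therefore has winding $\pm 1$; the bookkeeping step you flag as ``delicate'' is exactly the content of the paper's final sentence.
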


\begin{proof}
As in the proof of Lemma~\ref{notinjlem}, we note that a simple closed curve $\tilde \gamma :  S^1 \to \R^2$ with $0 \notin \im(\tilde \gamma)$ satisfies
\begin{equation}
\label{windingn2}
|\wind{\tilde\gamma}{0}| \leq 1.
\end{equation}
By our assumption, there are only finitely many (unordered) pairs $\{s_1,s_1'\},$ $\dots,$ $\{s_n,s_n'\}$ in $S^1$ such that $s_i \neq s_i'$ but $\gamma(s_i) = \gamma(s_i')$. Denote the set of these pairs by $P_\gamma$. Each pair of points $\{s_i,s_i'\}$ cuts $S^1$ into two closed arcs. Let $A_\gamma$ be the sub-collection of such arcs $b$ with $\wind{\gamma|_{b}}{0} = 0$. We modify $\gamma$ recursively. Set $\gamma_0 = \gamma$ and $\gamma_{i+1}$ is obtained from $\gamma_{i}$ by choosing some $b \in A_{\gamma_i}$ and set $\gamma_{i+1}|_{b} = \text{const}$ and $\gamma_{i+1}|_{S^1 \setminus b} = \gamma_{i}|_{S^1 \setminus b}$. By a further reparametrization in a neighborhood of $b$, in order that $\gamma_{i+1}$ is not constant on $b$, we can achieve that that $\gamma_{i+1}$ is in general position. This neighborhood can be chosen small enough such that $A_{\gamma_{i+1}}$ is equal $A_{\gamma_{i}}$ minus the pairs $\{s,s'\} \in A_{\gamma_{i}}$ with $\{s,s'\} \cap b \neq \emptyset$. It follows that $|P_{\gamma_{i+1}}| < |P_{\gamma_{i}}|$ as well as $|A_{\gamma_{i+1}}| < |A_{\gamma_{i}}|$ and in $k \leq n$ steps we get that $A_{\gamma_k}$ is empty. Now, 
\[ \wind{\gamma}{0} = \wind{\gamma_k}{0} \]
because in each step we removed loops with zero winding number w.r.t.\ $0$. By \eqref{windingn2} we know that $P_{\gamma_k} \neq \emptyset$. Take a pair $\{s,s'\} \in P_{\gamma_k}$ such that one of the two arcs $[s,s']$ or $[s',s]$ contains no other pair of $P_{\gamma_k}$ (this is possible because $P_{\gamma_k}$ is nonempty but finite). Call this arc $a$. Let $\gamma'$ be the restriction of $\gamma_k$ to $a$. 
Then
$\gamma'$ is a closed Jordan curve and thus $|\wind{\gamma'}{0}| \leq 1$ by \eqref{windingn2}. But $\wind{\gamma'}{0} = 0$ is not possible because $A_{\gamma_k}$ is empty. By the construction of $\gamma_k$, the pair $\{s,s'\}$ is also a double point for $\gamma$, i.e., $\{s,s'\} \in A_\gamma$. Finally, we note that $|\wind{\gamma_k|_a}{0}| = |\wind{\gamma|_a}{0}|$ because $\gamma_k|_a$ is obtained from $\gamma|_a$ by removing some loops with zero winding number defined on sub-arcs of $a$ (and a slight reparametrization which does not matter for the winding number). This proves that $|\wind{\gamma|_a}{0}| = 1$.
\end{proof}

This can readily be generalized to particular curves into $\R^3$.

\begin{cor}
\label{windcor2}
Let $\gamma : S^1 \to \R^3$ be a curve in general position with image contained in the cylinder $S^1 \times \R$. If $|\wind{\pi\circ\gamma}{0}| > 1$, then there is an arc $a = [s,s']$ in $S^1$ such that $\gamma(s) = \gamma(s')$ and $|\wind{\pi\circ\gamma|_a}{0}| = 1$.
\end{cor}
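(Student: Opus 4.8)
The plan is to deduce Corollary~\ref{windcor2} directly from Lemma~\ref{windlem2} by projecting the problem down to the plane and then lifting the conclusion back up using the fact that $\gamma$ takes values in the cylinder $S^1 \times \R$. The key observation is that on the cylinder the projection $\pi$ is injective in the angular variable: writing a point of $S^1 \times \R$ as $(\theta, h)$ with $\theta \in S^1$ the base point and $h$ the height, we have $\pi(\theta, h) = \theta$ under the natural identification of $S^1 \subset \R^2$ with its image. Consequently, for a point $q \in S^1$ the fiber $\pi^{-1}(q) \cap (S^1 \times \R)$ is the single vertical line $\{q\} \times \R$, and two parameters $s, s' \in S^1$ satisfy $\pi(\gamma(s)) = \pi(\gamma(s'))$ if and only if $\gamma(s)$ and $\gamma(s')$ lie on the same vertical line of the cylinder.

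First I would set $\tilde\gamma \defl \pi \circ \gamma : S^1 \to \R^2$, a closed curve in the plane. Since $\gamma$ is in general position and $\pi$ restricted to the cylinder is injective on the angular coordinate, one checks that $\tilde\gamma$ is again in general position: a double point $\tilde\gamma(s) = \tilde\gamma(s')$ forces $\gamma(s)$ and $\gamma(s')$ to share the same base point $\theta \in S^1$, so the finiteness and multiplicity-two conditions for $\tilde\gamma$ follow from those for $\gamma$ together with the structure of the cylinder. With $|\wind{\tilde\gamma}{0}| = |\wind{\pi \circ \gamma}{0}| > 1$ by hypothesis, Lemma~\ref{windlem2} applies to $\tilde\gamma$ and yields an arc $a = [s,s']$ in $S^1$ with $\tilde\gamma(s) = \tilde\gamma(s')$ and $|\wind{\tilde\gamma|_a}{0}| = 1$, that is, $|\wind{\pi\circ\gamma|_a}{0}| = 1$.

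The remaining task is to upgrade the planar coincidence $\tilde\gamma(s) = \tilde\gamma(s')$ to the spatial coincidence $\gamma(s) = \gamma(s')$ demanded by the corollary. This is the one point where the cylinder hypothesis is genuinely used rather than merely convenient. The equality $\pi(\gamma(s)) = \pi(\gamma(s'))$ says the two points agree in the base coordinate $\theta$, but a priori they could differ in height $h$. To fix this I would examine the proof of Lemma~\ref{windlem2} more closely: the arc it produces comes from a pair $\{s,s'\} \in P_\gamma$ that is a genuine double point of the original curve, not merely a double point created by the recursive loop-removal. Since the loop-removal modifications were performed on $\tilde\gamma$, a double point of $\tilde\gamma$ coming from an original double point of $\gamma$ will indeed have $\gamma(s) = \gamma(s')$ provided the construction is run so that the pairs tracked for $\tilde\gamma$ are exactly the images under $\pi$ of the pairs of $\gamma$. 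I therefore expect the main obstacle to be bookkeeping rather than conceptual: I must verify that the arc $a$ furnished by Lemma~\ref{windlem2} corresponds to a pair at which the heights of $\gamma$ also agree. The cleanest way is to redefine the double-point set $P$ at the level of $\gamma$ (pairs with $\gamma(s) = \gamma(s')$, equivalently pairs with equal base point and equal height on the cylinder) and observe that running the argument of Lemma~\ref{windlem2} verbatim with this finer $P$ — noting that removing a zero-winding loop of $\pi\circ\gamma$ does not affect which original pairs survive — produces an arc endpoint pair on which $\gamma$, and not just $\pi\circ\gamma$, coincides.
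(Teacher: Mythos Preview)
Your central claim in the second paragraph---that $\tilde\gamma = \pi\circ\gamma$ is again in general position---is false, and this is where the argument breaks. The image of $\tilde\gamma$ lies in the one-dimensional set $S^1\subset\R^2$. A map $S^1\to S^1$ of degree $k$ with $|k|>1$ hits a generic target point at least $|k|$ times, so $\tilde\gamma$ has infinitely many double points, not finitely many; it is never in general position. Concretely, take an embedded helix on the cylinder that wraps around twice in the angular direction and closes up: $\gamma$ has \emph{no} double points at all, yet $\pi\circ\gamma$ covers $S^1$ twice. Lemma~\ref{windlem2} therefore does not apply to $\tilde\gamma$, and your reasoning ``finiteness and multiplicity-two for $\tilde\gamma$ follow from those for $\gamma$'' conflates coincidence of base points with coincidence of the full point on the cylinder.

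Your attempted repair in the last paragraph---rerun the loop-removal of Lemma~\ref{windlem2} using only the genuine double points of $\gamma$---is not ``verbatim'': the step in that proof which says ``$\gamma'$ is a Jordan curve, hence $|\wind{\gamma'}{0}|\le 1$'' now fails, because the innermost arc gives an embedded circle \emph{on the cylinder}, not a Jordan curve in the plane. One would need the separate topological fact that an embedded circle in $S^1\times\R$ has projection-winding in $\{-1,0,1\}$, which you never invoke. The paper sidesteps both problems by a homotopy $H_s(x,y,z)=((sz+1)x,(sz+1)y,z)$ that spreads the cylinder into a planar annulus: $\pi\circ H_1$ is injective on $S^1\times[0,1]$, so $\pi\circ H_1\circ\gamma$ is genuinely in general position with the \emph{same} double-point pairs as $\gamma$, and Lemma~\ref{windlem2} applies directly; homotopy invariance transports the winding numbers back.
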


\begin{proof}
By some scaling and translation of $\gamma$ in the $z$-direction we can assume that $\im(\gamma) \subset S^1 \times [0,1]$ since these operations do not change $\pi \circ \gamma$. Let $Z \defl S^1 \times [0,1]$ be this closed cylinder. Consider the following deformation $H : [0,1] \times Z \to \R^3$ of $Z$ in $\R^3$ given by
\[ H_s(x,y,z) := ((sz+1)x,(sz+1)y,z). \]
Obviously, $0$ is not in the image of $\pi \circ H$ and $H_0 = \id$. By the homotopy invariance of the winding number we have
\[ \wind{\pi \circ \gamma}{0} = \wind{\pi \circ H_1 \circ \gamma}{0}. \]
The result follows now by the lemma above by noting that $\pi \circ H_1$ is injective: If $(z+1)(x,y,0) = (z'+1)(x',y',0)$, then $(z+1)^2 = (z'+1)^2$ hence $z = z'$ and consequently also $(x,y) = (x',y')$.
\end{proof}

Now we are ready to prove Proposition~\ref{topdisc2}. In case $\partial V$ can be parametrized by a simple closed curve, the statement is a direct consequence of Corollary~\ref{windcor2}. The general case is reduced to this one by an approximation argument.

\begin{proof}[Proof of Proposition~\ref{topdisc2}]
By restricting $G$ if necessary, we can assume that $U$ is bounded and that $G$ has a continuous extension to $\overbar U$. This has the advantage that $G$ as well as $G^{-1}$ are uniformly continuous. The open set $V$ can be approximated from the inside by connected open sets $V'$ such that $\partial V'$ is parametrized by finitely many simple closed Lipschitz curves. This can be achieved for example by representing $V$ as a union of dyadic squares and considering a fixed point of $V$ and its connected component in the interior of the union of all squares bigger than a certain size. Because $V$ and $\R^2 \setminus \overbar V$ are connected we can even assume that the boundary of $V'$ is parametrized by just one simple closed Lipschitz curve $\gamma : S^1 \to V$ (this could be justified, for example,   
by the Jordan Curve Theorem). 

By the connectedness of $V$ and the fact that $G_h(\partial V) \subset S^1$, there is a compact connected set $K_i \subset V$ ($i$ stands for `inside') such that
\[ V \cap G_h^{-1}(0) \subset K_i. \]
Similarly, because $U$ is bounded, there is a compact set $K_o \subset \R^2 \setminus \overbar V$ ($o$ stands for `outside') such that
\[ (\R^2 \setminus \overbar V) \cap G_h^{-1}(0) \subset K_o. \]
Obviously, 
\[ G_h^{-1}(0) \subset K_i \cup K_o \]
since $G_h(\partial V)$ does not contain $0$. Let $K_o'$ be an unbounded closed connected set such that $K_o \subset K_o' \subset \R^2 \setminus \overbar V$. Take the union of $\B(K_o,\delta)$ for some $\delta > 0$ with a piecewise linear ray connecting it to infinity for example (remember $\R^2 \setminus \overbar V$ is path connected). We further assume that the approximation of $V'$ inside $V$ is good enough such that
\begin{equation}
\label{eqKi}
\B(K_i,\epsilon) \subset V'
\end{equation}
and
\begin{equation}
\label{eqKo}
\B(K_o',\epsilon) \subset \R^2 \setminus \overbar V
\end{equation}
for some $\epsilon > 0$. To this end, note that $K_o'$ is closed and disjoint from the compact set $\overbar V$. We can also assume that the approximation of $V'$ in $V$ is good enough such that $d(G_h(p),S^1) < \tfrac{1}{4}$ for all $p \in \partial V'$. This is possible because $G_h(\partial V) \subset S^1$. Consider a piecewise linear approximation $\sigma$ of $G \circ \gamma$ such that $d(\sigma,G \circ \gamma) < \tfrac{1}{4}$. With these bounds
\begin{align*}
d(\sigma(s), S^1 \times \R) & \leq d(\sigma(s), G(\gamma(s)) + d(G(\gamma(s), S^1 \times \R) \\
 & < \frac{1}{4} + \frac{1}{4} = \frac{1}{2}
\end{align*}
for all $s \in S^1$. This allows us to define the closed curve $\tilde \sigma : S^1 \to \R^3$ by post-composing $\sigma$ with the orthogonal projection onto the cylinder $S^1 \times \R$ in $\R^3$. By choosing $\sigma$ appropriately we can assume that $\tilde \sigma$ is in general position. The following equations hold:
\begin{equation}
\label{windapproxeq}
\degr 0 {G_h} V = \degr 0 {G_h}{V'} = \wind {G_h \circ \gamma} 0 = \wind {\pi \circ \tilde \sigma} 0
\end{equation}
The fist equation holds because $V \cap G_h^{-1}(0) \subset K_i \subset V'$ and the locality property for the degree, the second equation by the definition of the winding number and the third one is induced by a linear homotopy $H : [0,1] \times \B(0,1) \to \R^2$ since, by the estimates
\[ 
d(G \circ \gamma,\tilde \sigma)  \leq d(G \circ \gamma,\sigma) + d(\sigma,\tilde \sigma) 
  < \frac{1}{4} + \frac{1}{2} < 1,
\]
the point $0$ is not contained in the image $H([0,1]\times S^1)$. Assume by contradiction that $|\degr 0 {G_h} V| > 1$. Then \eqref{windapproxeq} implies that $|\wind {\pi \circ \tilde \sigma} 0| > 1$ as well. By Corollary~\ref{windcor2} we can find an arc $a = [s_1,s_2]$ in $S^1$ such that $\tilde \sigma(s_1) = \tilde \sigma(s_2)$ and $|\wind{\pi \circ \tilde \sigma|_a}{0}| = 1$. The curve $G \circ \gamma|_a$ is in general not a closed curve. But $G(\gamma(s_1))$ is close to $G(\gamma(s_2))$, the closeness depends on how good the approximation of $V'$ inside $V$ is and how small $d(G\circ\gamma,\sigma)$ is. Because $G^{-1}$ is uniformly continuous, we can make $\gamma(s_1)$ as close to $\gamma(s_2)$ as we want. We construct now a closed curve out of $G\circ\gamma|_a$ by parameterizing the straight line connecting $\gamma(s_2)$ with $\gamma(s_1)$ on $S^1 \setminus a$. We call this curve $\gamma'$. If the approximation is good enough and the fact that $G$ is uniformly continuous, a linear homotopy forces
\[ \pm 1 = \wind {\pi \circ \tilde \sigma|_a} 0 = \wind {G_h \circ \gamma'} 0. \]
For the rest we assume that $d(\gamma(s_1),\gamma(s_1)) < \epsilon$. Then the line connecting $\gamma(s_2)$ with $\gamma(s_1)$ can't intersect $K_i$ resp.\ $K_o'$ because otherwise $\gamma(s_1)$ and $\gamma(s_2)$ would be contained in $\B(K_i,\epsilon)$ resp.\ $\B(K_o,\epsilon)$ contradicting \eqref{eqKi} resp.\ \eqref{eqKo} because the image of $\gamma$ has distance bigger than $\epsilon$ from $K_i$ resp.\ $K_o'$. The sets $K_i$ and $K_o'$ are connected, hence there are components $W_i$ and $W_o$ in $\co(\gamma')$ such that $K_i \subset W_i$ and $K_o' \subset W_o$. The locality property and the multiplication formula for the degree together with the observation that $W_o$ is the unique unbounded component of $\co(\gamma')$ (since $K_o'$ is unbounded) imply that
\begin{align*}
\pm 1 & = \wind {G_h \circ \gamma'} 0 \\
 & = \wind {\gamma'} {W_i} \degr 0 {G_h} {W_i} + \wind {\gamma'} {W_o} \degr 0 {G_h} {W_o} \\
 & = \wind {\gamma'} {W_i} \degr 0 {G_h} {W_i}.
\end{align*}
Therefore,
\[ 1 = |\degr 0 {G_h} {W_i}| = |\degr 0 {G_h} {V}|, \]
by successively using the equation above and then the locality property for the degree together with the inclusion $V \cap G_h^{-1}(0) \subset K_i \subset W_i$. But this contradicts our assumption $|\degr 0 {G_h} V| > 1$.
\end{proof}

Instead of taking $S^1$ in the projection we can take any other simple closed curve $C \subset \R^2$.

\begin{cor}
Let $G : U \to \R^3$ be an embedding of an open set $U \subset \R^2$, $C \subset \R^2$ a simple closed curve and $q \in \R^2 \setminus C$. Assume that $V$ is a bounded connected open set such that $\overbar V \subset U$, $G_h(\partial V) \subset C$ and $\R^2 \setminus \overbar V$ is connected. Then
\[ \degr{q}{G_h}{V} \in \{-1,0,1\}. \]
\end{cor}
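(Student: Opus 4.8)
The plan is to reduce to Proposition~\ref{topdisc2} by straightening $C$ to the unit circle, after first disposing of the case in which $q$ lies outside $C$. Since $C$ is a simple closed curve, the Jordan curve theorem gives exactly two components of $\R^2 \setminus C$: a bounded one (the interior of $C$) and the unbounded one, and $q$ lies in one of them. If $q$ lies in the unbounded component, then because $G_h(\partial V) \subset C$ this component is a connected, unbounded subset of $\R^2 \setminus G_h(\partial V)$ and hence is contained in the unbounded component of the latter. As $G_h(\overbar V)$ is compact, the degree $\degr{\cdot}{G_h}{V}$ vanishes on the unbounded component of $\R^2 \setminus G_h(\partial V)$, so $\degr{q}{G_h}{V} = 0$ and we are done in this case.

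For the remaining case $q$ lies in the interior of $C$. By the Jordan--Sch\"onflies theorem there is a homeomorphism $\psi_0 : \R^2 \to \R^2$ with $\psi_0(C) = S^1$, and since a planar homeomorphism preserves the boundedness of complementary components, $\psi_0$ maps the interior of $C$ onto the open unit disk; thus $\psi_0(q)$ lies in the open disk. Post-composing $\psi_0$ with a homeomorphism of $\R^2$ that is the identity outside the closed disk and carries $\psi_0(q)$ to the origin, I obtain a homeomorphism $\psi$ of $\R^2$ with $\psi(C) = S^1$ and $\psi(q) = 0$. I then lift $\psi$ to the homeomorphism $\Psi : \R^3 \to \R^3$, $\Psi(x,y,z) = (\psi(x,y),z)$, and set $G' \defl \Psi \circ G$. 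This $G'$ is again an embedding of $U$ into $\R^3$, its horizontal part is $G'_h = \psi \circ G_h$, and it satisfies $G'_h(\partial V) = \psi(G_h(\partial V)) \subset \psi(C) = S^1$; the hypotheses $\overbar V \subset U$ and $\R^2 \setminus \overbar V$ connected are unaffected. Proposition~\ref{topdisc2} then yields $\degr{0}{G'_h}{V} \in \{-1,0,1\}$.

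It remains to transfer this bound back to $\degr{q}{G_h}{V}$. Choosing a large open ball $W$ with $G_h(\overbar V) \cup \{q\} \subset W$, the multiplication formula for the degree recalled in the preliminaries, applied to $G'_h = \psi \circ G_h$, gives $\degr{0}{\psi\circ G_h}{V} = \sum_l \degr{0}{\psi}{W_l}\,\degr{W_l}{G_h}{V}$, the sum ranging over the components $W_l$ of $W \setminus G_h(\partial V)$. Since $\psi$ is a homeomorphism, $\psi^{-1}(0) = q$ is its only preimage of $0$, so $\degr{0}{\psi}{W_l}$ equals $\pm 1$ on the single component containing $q$ and $0$ on all others, the sign being the constant orientation of $\psi$. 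Hence $\degr{0}{G'_h}{V} = \pm\,\degr{q}{G_h}{V}$, and comparing with the bound from Proposition~\ref{topdisc2} forces $\degr{q}{G_h}{V} \in \{-1,0,1\}$.

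The main obstacle is exactly the dichotomy flagged at the outset: a homeomorphism of $\R^2$ cannot carry a point exterior to $C$ onto the origin interior to $S^1$, so an exterior $q$ cannot be straightened into the setting of Proposition~\ref{topdisc2} and has to be treated on its own; fortunately this case is immediate, as the degree vanishes on the unbounded complementary component. The only other point demanding care is the degree bookkeeping in the last step, namely checking that $\psi$ contributes precisely a factor $\pm 1$ and that every component of $W \setminus G_h(\partial V)$ not containing $q$ drops out of the sum.
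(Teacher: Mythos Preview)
Your proof is correct and follows essentially the same route as the paper: dispose of the exterior case by observing the degree vanishes there, then in the interior case apply the Jordan--Sch\"onflies Theorem to straighten $C$ to $S^1$ with $q$ mapping to $0$, lift the resulting planar homeomorphism to $\R^3$ via $(\psi,\id_\R)$, and invoke Proposition~\ref{topdisc2}. The paper's version is terser---it does not spell out the degree transfer $\degr{0}{\psi\circ G_h}{V} = \pm\,\degr{q}{G_h}{V}$ via the multiplication formula, nor the extra disk homeomorphism needed to carry $\psi_0(q)$ to $0$---so your writeup simply fills in details the paper leaves implicit.
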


\begin{proof}
If $q$ is not in the bounded component of $\R^2 \setminus C$, it is obvious that $\degr{0}{G_h}{V}$ vanishes. Otherwise there is a homeomorphism $\varphi : \R^2 \to \R^2$ such that $\varphi(C) = S^1$ and $\varphi(q) = 0$ due to the Jordan-Sch\"onflies Theorem. If we consider $G' \defl (\varphi \times \text{id}_{\R}) \circ G$ and apply Proposition~\ref{topdisc2}, we get the result.
\end{proof}

As indicated at the end of the last section, the fact that $G_h$ has bounded variation, i.e.,\ $\int_{\R^2} \#\{G_h = q\} \, dq < \infty$, implies that $G_h$ has essentially bounded variation. With the help of Proposition~\ref{topdisc2} this is immediate.

Note that $q \mapsto \#\{G_h = q\}$ is a Lebesgue measurable function, see e.g.\ \cite{rado}. This follows in essence from the fact that $G_h(B)$ is a Suslin set in case $B$ is a Borel set and Suslin sets are Lebesgue measurable.

\begin{cor}
Let $G : U \to \R^3$ be an embedding of a bounded open set $U \subset \R^2$ such that $G_h$ is of bounded variation. Then $G_h$ is of essentially bounded variation.
\end{cor}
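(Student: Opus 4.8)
The plan is to establish the pointwise inequality $K(q,G_h,U) \le \#\{G_h = q\}$ for almost every $q \in \R^2$. Integrating this and using the hypothesis $\int_{\R^2}\#\{G_h = q\}\,dq < \infty$ then gives $\int_{\R^2} K(q,G_h,U)\,dq < \infty$, which is exactly essentially bounded variation in the sense of Definition~\ref{EBV}. The measure-theoretic reduction is painless: since the indicatrix $\#\{G_h = q\}$ is integrable it is finite for almost every $q$, and a finite fibre $G_h^{-1}(q)$ is automatically discrete, so each of its points is regular. Thus for almost every $q$ we may write $G_h^{-1}(q) = \{p_1,\dots,p_m\}$ with $m = \#\{G_h = q\}$ and every $p_i$ regular.

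The crucial step is a local degree bound: for every regular point $p$, with $q \defl G_h(p)$, the local degree of $G_h$ at $p$ relative to $q$ lies in $\{-1,0,1\}$. To prove this I would fix $\epsilon > 0$ with $\overbar{\B(p,\epsilon)} \subset U$ and $G_h^{-1}(q) \cap \overbar{\B(p,\epsilon)} = \{p\}$, and then pick $\rho > 0$ smaller than the distance from $q$ to the compact set $G_h(\partial \B(p,\epsilon))$. Let $V$ be the connected component of $G_h^{-1}(\oB(q,\rho))$ containing $p$. Connectedness forces $V \subset \B(p,\epsilon)$, so $\overbar V \subset U$, and every boundary point of $V$ maps under $G_h$ into $C \defl \partial \oB(q,\rho)$, a simple closed curve whose inner region contains $q$. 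Replacing $V$ by its filling $\hat V$ (adjoining the bounded components of $\R^2 \setminus V$, all of which lie in $\overbar{\B(p,\epsilon)}$) makes $\R^2 \setminus \overbar{\hat V}$ connected while keeping $G_h(\partial \hat V) \subset C$, $\overbar{\hat V}\subset U$, and $G_h^{-1}(q)\cap \hat V = \{p\}$. The corollary to Proposition~\ref{topdisc2} then yields $\degr{q}{G_h}{\hat V} \in \{-1,0,1\}$, and by the locality property this value is independent of the isolating neighborhood, so it is the local degree $d_i$ at $p$.

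To assemble the bound, fix a good $q$ with fibre $\{p_1,\dots,p_m\}$ and local degrees $d_i \in \{-1,0,1\}$. Given any indicator domain $D$ for $(q,G_h,U)$, the preimages of $q$ in $\overbar D$ are exactly those $p_i$ lying in $D$, since $q \notin G_h(\partial D)$. Excising from $D$ everything outside small pairwise disjoint balls about these $p_i$ and applying the locality and sum properties of the degree gives $\degr{q}{G_h}{D} = \sum_{p_i \in D} d_i$, whence $|\degr{q}{G_h}{D}| \le \#\{i : p_i \in D\}$. For any collection $\mathcal S$ of pairwise disjoint indicator domains each $p_i$ belongs to at most one member, so $\sum_{D \in \mathcal S}|\degr{q}{G_h}{D}| \le m = \#\{G_h = q\}$; taking the supremum over $\mathcal S$ gives the claimed inequality $K(q,G_h,U) \le \#\{G_h = q\}$.

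I expect the main obstacle to be the local degree bound of the second paragraph, namely producing around an arbitrary regular point a neighborhood whose boundary $G_h$ sends into a simple closed curve and whose complement is connected, so that Proposition~\ref{topdisc2} becomes applicable. This filling-in construction is precisely where the special topology of a surface projection is used: for a general map of bounded variation one can only control the local degree at all but countably many regular points, whereas here Proposition~\ref{topdisc2} delivers the bound at every regular point.
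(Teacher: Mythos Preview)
Your proposal is correct and follows essentially the same route as the paper: reduce to the pointwise bound $K(q,G_h,U)\le \#\{G_h=q\}$ at points with finite fibre, obtain the local degree bound $\ind{G_h}{p_i}\in\{-1,0,1\}$ by constructing around each $p_i$ a filled-in neighborhood whose boundary maps into a small circle about $q$ so that Proposition~\ref{topdisc2} (via its corollary) applies, and then use locality and the sum property to control the degree on arbitrary indicator domains. Your construction of the isolating neighborhood (take the connected component of $G_h^{-1}(\oB(q,\rho))$ through $p$ and fill its holes) is a slight variant of the paper's (take $G_h^{-1}(\B(q,s))\cap\B(p_i,r)$ and fill its holes), but the idea and the use of Proposition~\ref{topdisc2} are identical.
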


\begin{proof}
For almost every point $q \in \R^2$ it holds that $G_h^{-1}(q)$ is finite. Take any such point and label the preimages by $p_1,\dots,p_n$. Let $r > 0$ such that the balls $\B(p_i,r)$, $i=1,\dots,n$, are pairwise disjoint and contained in $U$. By the locality property for the degree it is obvious that
\[ \ind{G_h}{p_i} \defl \degr{q}{G_h}{D} \]
is independent of the choice of an open neighborhood $D \subset \B(p_i,r)$ of $p_i$. Proposition~\ref{topdisc2} implies that
\[ \ind{G_h}{p_i} \in \{-1,0,1\} \]
by constructing an appropriate domain $D$. In order to construct $D$ take $s$ small enough such that
\[ G_h^{-1}(\B(q,s)) \cap \partial \B(p_i,r) = \emptyset. \]
Define
\[ K \defl G_h^{-1}(\B(q,s)) \cap \B(p_i,r) \]
and let $K'$ be the union of $K$ with all the bounded components of $\R^2 \setminus K$. Then taking the interior of $K'$ for $D$ works fine. If $D$ is any indicator domain for $(q,G_h,U)$, then again by the locality and sum property for the degree
\[ \degr{q}{G_h}{D} = \sum_{p_i \in D} \ind{G_h}{p_i}. \]
For the supremum over all systems $\mathcal S$ of pairwise disjoint indicator domains we obtain
\[ K(q,G_h,U) = \sup_{\mathcal S} \sum_{D \in \mathcal S} |\degr{q}{G_h}{D}| \leq \sum_{i=1}^n |\ind{G_h}{p_i}| \leq \#\{G_h = q\}. \]
This estimate is true for almost all $q \in \R^2$. Hence, $K(q,G_h,U)$ is integrable because $\#\{G_h = q\}$ is.
\end{proof}

\bibliographystyle{bibstyle}
\bibliography{refs}

\end{document}